\setlist[enumerate,1]{label={(\alph*)}}
\setlist[enumerate,2]{label={(\roman*)}}
\newif\ifdraft
\newtheorem{thm}{Theorem}[section]
\newtheorem{prop}[thm]{Proposition}
\newtheorem{lem}[thm]{Lemma}
\newtheorem{clm}[thm]{Claim}
\theoremstyle{definition}
\newtheorem{definition}[thm]{Definition}
\newtheorem*{notation*}{Notation}
\theoremstyle{remark}
\newtheorem{obs}[thm]{Observation}
\newtheorem{algorithm}[thm]{Algorithm}
\newcommand{\ignore}[1]{}
\newcommand{\R}{\mathbb R}
\newcommand{\N}{\mathbb N}
\newcommand{\Prob}{{\mathbb{P}}}
\newcommand{\mB}{{\mathcal{B}}}
\newcommand{\mA}{{\mathcal{A}}}
\newcommand{\mF}{{\mathcal{F}}}
\newcommand{\mL}{{\mathcal{L}}}
\newcommand{\mQ}{{\mathcal{Q}}}
\newcommand{\mS}{{\mathcal{S}}}
\newcommand{\E}{{\mathbb{E}}}
\newcommand{\tO}{{\tilde{O}}}
\newcommand{\tQ}{{\tilde{Q}}}
\newcommand{\tR}{{\tilde{R}}}
\newcommand{\elltor}[1]{{\ell_{#1}^{\text{tor}}}}
\newcommand{\ellfar}[1]{{\ell_{#1}^{\text{far}}}}
\newcommand{\oone}{{o \left(1\right)}}
\newcommand{\termdefine}[1]{\textbf{#1}}
\newcommand{\given}{{|}}
\begin{document}
\title{A lower bound for the $n$-queens problem}

\author{Zur Luria}
\address{Software Department, Azrieli College of Engineering, Jerusalem, Israel}
\email{zluria@gmail.com}

\author{Michael Simkin}
\address{Harvard University Center of Mathematical Sciences and Applications, Cambridge, MA, USA.}
\email{msimkin@cmsa.fas.harvard.edu}

\begin{abstract}
	The $n$-queens puzzle is to place $n$ mutually non-attacking queens on an $n \times n$ chessboard. We present a simple two stage randomized algorithm to construct such configurations. In the first stage, a random greedy algorithm constructs an approximate \textit{toroidal} $n$-queens configuration. In this well-known variant the diagonals wrap around the board from left to right and from top to bottom. We show that with high probability this algorithm succeeds in placing $(1-o(1))n$ queens on the board. In the second stage, the method of absorbers is used to obtain a complete solution to the non-toroidal problem. By counting the number of choices available at each step of the random greedy algorithm we conclude that there are more than $\left( \left( 1 - o(1) \right) n e^{-3} \right)^n$ solutions to the $n$-queens problem. This proves a conjecture of Rivin, Vardi, and Zimmerman in a strong form.
\end{abstract}

\maketitle

\pagestyle{plain}

\section{Introduction}


An $n$-queens configuration is a placement of $n$ mutually non-attacking queens on an $n \times n$ chessboard. Equivalently, it is an order-$n$ permutation matrix in which the sum of each diagonal is at most $1$.

Let $\mQ(n)$ be the number of solutions to the $n$-queens puzzle. The problem of determining $\mQ(n)$ was posed in 1848 by German chess enthusiast Max Bezzel (specifically for $n=8$). It soon attracted the attention of several eminent mathematicians, including Gauss. P\'{o}lya also considered the toroidal $n$-queens problem, in which the diagonals wrap around the board from left to right and from top to bottom. We call such diagonals \termdefine{toroidal diagonals}. Although our main result concerns the classical $n$-queens problem, the toroidal problem features prominently in our proof.
For more on the history of the problem, as well as a survey of results and a list of open problems, we refer the reader to Bell and Stevens \cite{bell2009survey}.

To date, the best known upper bound on $\mQ(n)$ is due to the first author \cite[Theorem 1.4]{luria2017new}: For $\alpha = 1.587$, it holds that $\mQ(n) \leq \left( \left( 1 + \oone \right) ne^{-\alpha} \right)^n$.
Regarding lower bounds, it is not immediately obvious that $\mQ(n)>0$ for all (large enough) $n$. Pauls \cite{pauls1874maximalproblem,pauls1874maximalproblemII} showed that this holds for all $n \geq 4$. Thus, $\mQ(n) = \Omega (1)$, and to our knowledge this is the best known general lower bound. Rivin, Vardi, and Zimmerman \cite[Theorem 1]{rivin1994n} gave an exponential lower bound on $\mQ(n)$ that holds for a dense set of integers, and the first author \cite[Theorem 1.1]{luria2017new} showed that for infinitely many $n$, the superexponential bound $\mQ(n) \geq n^{n/16-o(n)}$ holds. 

Our main result is the following lower bound on $\mQ(n)$.

\begin{thm}\label{thm:main}
	For all natural numbers $n$,
	\[
	\mQ(n) \geq \left( \left( 1 - \oone \right) \frac{n}{e^3} \right)^n.
	\]
\end{thm}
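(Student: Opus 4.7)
The plan is to analyze a random greedy algorithm that builds a large partial toroidal $n$-queens configuration, track the number of available placements at each step, and then convert the resulting partial toroidal object into a full non-toroidal configuration while losing only a subexponential factor in the count. I will identify the board with $\Z/n\Z \times \Z/n\Z$ so that every row, column, positive diagonal, and negative diagonal is a set of exactly $n$ squares. The random greedy process repeatedly picks a uniformly random square that is not attacked by any previously placed queen. Writing $a_i$ for the number of available squares after $i$ rounds, the number of ordered sequences of $k$ placements is $\prod_{i=0}^{k-1} a_i$, so the number of distinct partial toroidal configurations of size $k$ producible this way is at least $\prod_{i=0}^{k-1} a_i / k!$.

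The heuristic to aim for is $a_i \approx n^2 (1-i/n)^4$: each placed queen kills one row, one column, and two diagonals, and if these four deletions behaved independently a square would survive all of them with probability $(1-i/n)^4$. The heart of the proof---and where I expect essentially all of the difficulty---is to make this heuristic rigorous up to $i = (1-o(1))n$. My plan here is to use Wormald's differential equations method: besides $a_i$, I would introduce finer statistics counting the number of available squares on each individual row, column and diagonal, show via a Doob-martingale / Azuma--Hoeffding computation that these statistics remain sharply concentrated (using that a single placement affects only four lines and hence changes each statistic by $O(n)$), and verify that the conditional one-step changes match the ODE whose solution is $(1-i/n)^4 n^2$, with analogous expressions for the line statistics. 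Plugging the resulting estimate for $a_i$ into $\prod_{i=0}^{k-1} a_i / k!$ with $k = (1-o(1))n$ and invoking Stirling yields $\left( (1-o(1)) n/e^3 \right)^n$ partial toroidal configurations, precisely the target base.

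The final step is to pass from partial toroidal configurations of size $(1-o(1))n$ to full non-toroidal configurations. My plan is to first place the remaining $o(n)$ queens to complete a permutation, then relocate the $o(n)$ queens that lie on diagonals which wrap around the torus, so that the two families of toroidal diagonals split into distinct non-toroidal diagonals. Because only $o(n)$ queens are moved, this transformation is at most $n^{o(n)}$-to-one, which is absorbed into the $(1-o(1))$ factor in the base of the main estimate. I expect this part to be technically simpler than the concentration analysis, but to require care in showing that enough partial configurations admit a valid completion; the fact that Step~2 controls the available positions in every row, column and diagonal should provide substantial slack for choosing the completion and repairing any wrap-around conflicts.
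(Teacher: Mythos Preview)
Your outline of the random greedy phase matches the paper's approach: track the number of available squares in each of the $4n$ toroidal lines via a supermartingale / Azuma--Hoeffding argument, deduce $|\mA(t)| = (1+o(1))n^2(1-t/n)^4$, and convert this into a count via $\prod_{i<k} a_i / k!$. One small slip: placing a queen changes each line-count by at most a constant (at most $4$), not $O(n)$; this bounded-difference constant is exactly what makes the concentration go through.

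The genuine gap is the completion step. Your description---``complete a permutation, then relocate the $o(n)$ queens that lie on diagonals which wrap around the torus''---does not correspond to a well-defined operation. A partial toroidal configuration is already a partial non-toroidal configuration (the toroidal constraint is strictly stronger), so there are no ``wrap-around'' queens to relocate; the only issue is that $o(n)$ rows and columns remain uncovered. If you intend to first complete to a full \emph{toroidal} configuration, the paper explicitly notes this is open: after the greedy phase nearly all $2n$ toroidal diagonals are occupied, leaving essentially no room to place the remaining queens toroidally. If instead you intend to complete to an arbitrary permutation matrix and then repair diagonal conflicts, you have not supplied a repair mechanism, and fixing one conflict can create others.

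The paper resolves this with an \emph{absorber} gadget: for each uncovered row--column pair $(r,c)$ one finds a queen $(x,y)\in Q(T)$ such that the four non-toroidal diagonals through $(r,y)$ and $(x,c)$ are all unoccupied, removes $(x,y)$, and inserts $(r,y)$ and $(x,c)$. The key lemma---proved by coupling an early stage of the greedy process to a sparse binomial random subset of $[n]^2$ and counting \emph{safe} absorbers whose relevant toroidal diagonals are already blocked and hence cannot be spoiled later---is that $Q(T)$ contains $\Omega(n)$ absorbers for \emph{every} $(r,c)$. Since each absorption destroys only $O(1)$ absorbers for any other pair and only $n-T=o(n)$ absorptions are needed, they can be performed sequentially. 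This is the missing idea in your proposal, and the $n^{o(n)}$-to-one bound on the overcount also comes from the specific form of the absorber move rather than from a general relocation argument.
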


As a consequence of Theorem \ref{thm:main} and the trivial bound $\mQ(n) \leq n!$, we conclude that
\[
\lim_{n\to\infty} \frac{\log \mQ(n)}{n \log n} = 1.
\]

This proves, in a strong form, a conjecture of Rivin, Vardi, and Zimmerman \cite[Conjecture 1]{rivin1994n} who conjectured that the limit above is positive.

It is interesting to note that the previous lower bounds on $\mQ(n)$ are actually due to algebraic constructions of solutions to the toroidal $n$-queens puzzle. In contrast, our construction relies purely on probabilistic and combinatorial methods: random greedy algorithms and absorption. The former has proved to be a powerful tool when constructing constrained combinatorial objects such as regular graphs \cite{rucinski1992random}, triangle Ramsey graphs \cite{bohman2009triangle}, and high-girth approximate Steiner triple systems \cite{glock2020conjecture,bohman2019large}. Combined with absorbers, they also played important roles in the recent constructions of combinatorial designs \cite{keevash2014existence,glock2016existence}. In the latter example, a random greedy algorithm was the key to obtaining lower bounds on the count of the associated objects \cite{keevash2018counting}.

We prove Theorem \ref{thm:main} by describing a probabilistic algorithm to construct solutions to the $n$-queens puzzle. We formally describe the algorithm in Section \ref{sec:full algorithm}, and we give a brief overview here. The algorithm has two phases. The first is a random greedy algorithm that constructs an approximate toroidal $n$-queens configuration. Beginning with an empty board, we repeatedly place queens uniformly at random subject to the constraint that they may not share a row, column, or toroidal diagonal. We show that with high probability this algorithm succeeds in placing ${(1-o(1))n}$ queens on the board. This phase of the algorithm, which is analyzed in Section \ref{sec:rga}, also gives the enumeration used to prove Theorem \ref{thm:main}. In the second phase of the algorithm we use absorbers to perform a small number of modifications and obtain a complete solution. The absorbing phase is analyzed in Section \ref{sec:absorbers}.

In the random greedy phase of the algorithm it would be natural to choose each queen uniformly at random subject to the constraint that it may not share a row, column, or (non-toroidal) diagonal with a previously placed queen. However, we have found this algorithm challenging to analyze. The difficulty is due to the fact that the lengths of the diagonals vary, leading to a non-uniform distribution of the queens. In contrast, toroidal diagonals all have the same length. This simplifies the analysis of the random greedy algorithm.

\subsection{Notation and terminology}

We adopt the standard notation $[n]:=\{1, ... ,n\}$. Also, we write $a = b \pm c$ as a shorthand for $b-|c| \leq a \leq b+|c|$.

Throughout, all asymptotics are with $n\to\infty$. Since we are concerned only with asymptotic statements we assume that $n$ is sufficiently large for relevant inequalities to hold. For example, we may write $n^2-n > n$ without explicitly requiring $n > 2$.

We say that a sequence of events occurs \termdefine{with high probability} (\termdefine{w.h.p.}) if the probabilities of their occurrence tend to $1$. Typically, we will abuse terminology and say that a \textit{specific} event occurs w.h.p.\ while leaving the sequential statement to be inferred from context.

To formally define the classical and toroidal $n$ queens problems, we identify the $n \times n$ chessboard with $[n]^2$. In the classical problem, a queen at $(x,y)$ attacks all squares $(x',y')$ such that either $x'=x$, $y'=y$, $x'+y'=x+y$, or $x'-y'=x-y$. In the toroidal problem, these equations are replaced by equalities modulo $n$. The toroidal diagonals containing the square $(x,y)$ are
$\{(x',y')\in [n]^2: x'+y' = x+y \bmod n\}$ and $\{(x',y')\in [n]^2: x'-y' = x-y \bmod n\}$. Note that there are $n$ toroidal diagonals of each type, all of size $n$. A \termdefine{partial $n$-queens configuration} is a set $Q \subset [n]^2$ such that every row, column, and diagonal contains at most one element of $Q$. An $n$-queens configuration is a partial $n$-queens configuration of size $n$. Toroidal configurations are defined similarly using toroidal diagonals.

\section{A randomized construction of $n$-queens configurations}\label{sec:full algorithm}

In this section we formally define the algorithm we use to construct $n$-queens configurations. As mentioned in the introduction, we begin by using a random greedy algorithm to construct a partial \textit{toroidal} $n$-queens configuration.

Fix (a sufficiently large) $n \in \N$, and set
\[
\alpha = 10^{-4}, \quad T = \lfloor(1 - n^{-\alpha}) n \rfloor.
\]

\begin{algorithm}\label{alg:rga}
	\hfill
	\begin{itemize}
		\item Set $Q(0) = \emptyset$.
		  
		
		\item For each $0 \leq t < T$, let $\mA(t) \subseteq [n]^2$ be the set of \termdefine{available} positions, i.e., those positions lying in a row, column, and toroidal diagonals unoccupied by $Q(t)$.
		
		\item If $\mA(t) = \emptyset$, abort.
		
		\item Otherwise, choose $(x_t,y_t) \in \mA(t)$ uniformly at random and independently of previous choices. Set $Q(t+1) = Q(t) \cup \{(x_t,y_t)\}$.
	\end{itemize}
\end{algorithm}
For technical reasons, if the algorithm aborts at time $t$, then for every $t < s \leq T$ we set $Q(s) = Q(t)$.

In Proposition \ref{prop:dynamic concentration} we show that w.h.p.\ Algorithm \ref{alg:rga} does not abort. In the next phase of the algorithm we hope to remove a small number of queens from the board, and then complete the puzzle. The key is the idea of \textit{absorption}, which we now illustrate: Suppose $Q \subseteq [n]^2$ is a partial $n$-queens configuration that does not cover row $r$ and column $c$. We wish to obtain a partial $n$-queens configuration $Q'$ that covers all rows and columns covered by $Q$, and also covers row $r$ and column $c$. We might try adding $(r,c)$ to $Q$, but if either of the diagonals running through $(r,c)$ is occupied this will not work. Instead, we will look for a queen $(x,y) \in Q$ that satisfies:
\begin{enumerate}
	\item\label{itm:distinct diags} $(r,y)$ and $(x,c)$ do not share a diagonal (equivalently, $(r,c)$ and $(x,y)$ do not share a diagonal),
	
	\item\label{itm:free diags} none of the diagonals containing $(r,y)$ or $(x,c)$ are occupied.
\end{enumerate}
Supposing such a queen exists, we observe that $Q' \coloneqq \left( Q \setminus \{(x,y)\} \right) \cup \{(r,y),(x,c)\}$ is a partial $n$-queens configuration satisfying the conditions above. In this way, we have absorbed row $r$ and column $c$ into our configuration. We call such a queen an \termdefine{absorber for $(r,c)$ in $Q$}. We denote the set of absorbers for $(r,c)$ in $Q$ by $\mB_Q(r,c)$.

The following algorithm attempts to use absorbers to complete $Q(T)$ (the outcome of Algorithm \ref{alg:rga}).

\begin{algorithm}\label{alg:absorption}\hfill
	\begin{itemize}
		\item Let $L_R$ and $L_C$ be, respectively, the sets of rows and columns not covered by $Q(T)$. Note that $|L_R| = |L_C| \eqqcolon k$.
		
		\item Let $M = (r_1,c_1),(r_2,c_2),\ldots,(r_{k}, c_{k})$ be an arbitrary matching of $L_R$ and $L_C$.
		
		\item Set $R(0) \coloneqq Q(T)$.
		
		\item For $i=1,2,\ldots,k$:
		
		\begin{itemize}
			\item If $\mB_i \coloneqq \mB_{R(i-1)}(r_i,c_i) = \emptyset$ abort. 
			\item Otherwise, choose some $(x_i,y_i) \in \mB_i$ and set $R(i) \coloneqq \left( Q(i-1) \setminus \{(x_i,y_i)\} \right) \cup \{(x_i,c_i),(r_i,y_i)\}$.
		\end{itemize}
	\end{itemize}
\end{algorithm}

Clearly, if Algorithm \ref{alg:absorption} does not abort then $R(k)$ is an $n$-queens configuration. In Section \ref{sec:absorbers} we show that w.h.p.\ $Q(T)$ satisfies a combinatorial condition that guarantees the success of Algorithm \ref{alg:absorption}.

\section{Random queens on a torus}\label{sec:rga}

In this section we analyze Algorithm \ref{alg:rga}.

For $0 \leq t \leq T$ define the functions
\[
p(t) = 1 - \frac{t}{n}, \quad \varepsilon(t) = \frac{n^{0.51}}{ p(t)^{50}}.
\]

We refer to rows, columns, and toroidal diagonals collectively as \termdefine{lines}. Let $\mL$ be the set of lines. We note that $|\mL| = 4n$ and that each line has length $n$. For each $\ell \in \mL$ and $0 \leq t \leq T$, let $S_\ell(t)$ denote the number of available elements in line $\ell$ at time $t$. We say that a line is \termdefine{occupied at time $t$} if $Q(t)$ contains a position in the line.

\begin{prop}\label{prop:dynamic concentration}
	The following holds with probability $1 - \exp\left( - \Omega \left(n^{0.01}\right) \right)$. For every $\ell \in \mL$ and $0 \leq t \leq T$, if line $\ell$ is unoccupied at time $t$ then
	\[
	S_\ell(t) = n p(t)^3 \pm \varepsilon(t).
	\]
	Additionally, with probability $1 - \exp\left( - \Omega \left(n^{0.01}\right) \right)$, for every $0 \leq t \leq T$, it holds that $|\mA(t)| = n^2p(t)^4 \pm np(t) \varepsilon(t)$.
\end{prop}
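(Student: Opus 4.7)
The plan is to use the differential equation method (dynamic concentration) for random greedy processes. The driving heuristic is that at time $t$ exactly $t$ lines of each of the four types are occupied, so each of the three lines through a uniformly chosen available position in $\ell$ is unoccupied with probability $p(t) = 1 - t/n$; under approximate independence this gives $S_\ell(t) \approx n p(t)^3$, and the resulting expected one-step decrement $-3p(t)^2$ matches the derivative of this trajectory exactly.

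To make this rigorous I would introduce the error processes
\[
Y_\ell^{\pm}(t) \coloneqq \pm\bigl(S_\ell(t) - np(t)^3\bigr) - \varepsilon(t),
\]
and the stopping time $\tau$ equal to the first $t$ at which either some $Y_\ell^{\pm}(t) > 0$ for an unoccupied line $\ell$, or $|\mA(t)|$ leaves its claimed range. For $t < \tau$ I would compute $\E[\Delta S_\ell \mid \mathcal{F}_t]$ by averaging over $(x_t,y_t) \in \mA(t)$: a point $(x,y) \in \mA(t) \cap \ell$ becomes unavailable at step $t+1$ iff $(x_t,y_t)$ lies on one of the three lines $\ell_1,\ell_2,\ell_3$ through $(x,y)$ other than $\ell$. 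Since $\ell_1,\ell_2,\ell_3$ pairwise meet only at $(x,y)$, inclusion-exclusion gives the count $S_{\ell_1}+S_{\ell_2}+S_{\ell_3}-3$, and the inductive bounds $\tau > t$ yield
\[
\E[\Delta S_\ell \mid \mathcal{F}_t] = -\frac{3 S_\ell(t)}{np(t)} + O\!\left(\frac{\varepsilon(t)}{np(t)}\right).
\]

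The crux is the supermartingale property. Expanding,
\[
\E[\Delta Y_\ell^+ \mid \mathcal{F}_t] = -\frac{3(S_\ell(t) - np(t)^3)}{np(t)} - \varepsilon'(t) + O\!\left(\frac{\varepsilon(t)}{np(t)}\right),
\]
which is strictly negative provided the error $O(n^{-0.49} p(t)^{-51})$ is dominated by $\varepsilon'(t) \asymp 50\, n^{-0.49} p(t)^{-51}$. This is exactly what the exponent $50$ in $\varepsilon(t) = n^{0.51}(p(t)^{-50}-1)$ is engineered to arrange. Because each placement changes $S_\ell$ by at most $3$ (the new queen's row, column, and two diagonals each meet $\ell$ in at most one point), the increments of $Y_\ell^{\pm}$ are bounded by $O(1)$, and Azuma's inequality for the stopped supermartingale gives $\Pr[Y_\ell^{\pm}(t) > 0 \text{ for some } t \leq T] \leq \exp(-\Omega(\varepsilon(T)^2/T)) = \exp(-\Omega(n^{0.03}))$. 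A union bound over the $4n$ lines and the two signs then delivers the first claim with the required failure probability $\exp(-\Omega(n^{0.01}))$.

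Finally, the bound on $|\mA(t)|$ is essentially deterministic given the first claim: partitioning available positions by rows gives $|\mA(t)| = \sum_{\ell:\,\text{unoccupied row}} S_\ell(t)$, and since exactly $n-t = np(t)$ rows are unoccupied, the first claim forces $|\mA(t)| = np(t)\bigl(np(t)^3 \pm \varepsilon(t)\bigr) = n^2 p(t)^4 \pm np(t)\varepsilon(t)$ on the same event. The main obstacle I anticipate is the calibration of error bookkeeping in the one-step change: in particular, pinning down the absolute constant in the $O(\cdot)$ term to make it strictly smaller than the coefficient $50$ in $\varepsilon'(t)$, and treating uniformly the small-$t$ regime in which $\varepsilon(t)$ is itself very small, where Freedman's inequality with the (much smaller) quadratic variation $V(t) = O(\sum_{s<t} p(s)^2)$ may be needed in place of Azuma.
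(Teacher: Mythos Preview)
Your plan is essentially the paper's proof: define a stopping time at the first violation, show the shifted processes are supermartingales with $O(1)$ increments by matching $\E[\Delta S_\ell]\approx -3p(t)^2$ against $\Delta(np(t)^3)$ and letting the engineered $\varepsilon'(t)\asymp 50\,\varepsilon(t)/(np(t))$ swallow the $O(\varepsilon(t)/(np(t)))$ error, then apply Azuma and read off $|\mA(t)|$ by summing $S_\ell$ over the $np(t)$ unoccupied rows.

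Two technical points in your write-up need adjustment to make this go through. First, the per-step Lipschitz bound on $S_\ell$ is $4$, not $3$: once you condition on $\ell$ remaining unoccupied, the newly placed queen lies on four lines (row, column, two toroidal diagonals), none equal to $\ell$, each meeting $\ell$ in one point. More importantly, you must freeze $Y_\ell^{\pm}$ not only at $\tau$ but also at the step when $\ell$ itself becomes occupied; the paper introduces a per-line stopping time $\tau_\ell$ for exactly this reason. Without it, at the moment a queen lands on $\ell$ the quantity $S_\ell$ drops to $0$, so $Y_\ell^{-}$ jumps by roughly $+np(t)^3$, destroying both the supermartingale property and the bounded-increment hypothesis. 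Correspondingly, the one-step drift should be computed conditionally on the event that $\ell$ is still unoccupied at time $t+1$ (sampling $(x_t,y_t)$ uniformly from $\mA(t)\setminus\ell$), which is what the paper does. Finally, with your shift by the full $\varepsilon(t)$ you have $Y_\ell^{\pm}(0)=0$ and the bad event corresponds to $\lambda=0$ in Azuma; the paper shifts by $\tfrac12\varepsilon(t)$ so that the bad event is $\{S_\ell^{\pm}(t)>\varepsilon(t)/2\}$ and one may take $\lambda=\varepsilon(t)/2$. Your observation about small $t$ is well taken, but in the paper's setup no separate treatment (and no Freedman) is invoked.
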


A particular consequence of this proposition is that with probability $1 - \exp\left( - \Omega \left(n^{0.01}\right) \right)$ the algorithm does not abort. This is because for large enough $n$, for every $0 \leq t \leq T$ there holds $n^2p(t)^4 - np(t)\varepsilon(t) > 0$.

Proposition \ref{prop:dynamic concentration} can be interpreted as justifying the following independence heuristic: For every $t$ and every line $\ell \in \mL$, the probability that line $\ell$ is unoccupied at time $t$ is $p(t)$. If we assume independence then for every position the probability that it is available at time $t$ is $p(t)^4$. Hence, the expected number of available positions is $n^2p(t)^4$. If we condition on a specific line being unoccupied, then the expected number of available positions in it is $np(t)^3$.

We will use the following version of the Azuma-Hoeffding inequality.

\begin{thm}[{\cite[Lemma 1]{wormald1995differential}}]\label{thm:azuma}
	Let $X_0,X_1,\ldots$ be a supermartingale with respect to a filtration $\mF_0,\mF_1,\ldots$. For every $i \in \N$, let $a_i$ satisfy $a_i \geq |X_i - X_{i-1}|$. Then, for every $\lambda>0$ and $t \geq 0$, it holds that
	\[
	\Prob \left[ X_t \geq X_0 + \lambda \right] \leq \exp \left( - \frac{\lambda^2}{2 \sum_{i=1}^t a_i^2} \right).
	\]
\end{thm}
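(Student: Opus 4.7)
The plan is to use the exponential moment (Chernoff) method applied to the supermartingale differences $D_i \coloneqq X_i - X_{i-1}$. For any $s > 0$, Markov's inequality gives
\[
\Prob[X_t - X_0 \geq \lambda] \leq e^{-s\lambda} \, \E\!\left[e^{s(X_t - X_0)}\right],
\]
so the task reduces to bounding the moment generating function of $X_t - X_0 = \sum_{i=1}^t D_i$ and then optimizing over $s$.

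The key ingredient is a conditional Hoeffding-type lemma: if $Y$ is integrable, $|Y| \leq a$ almost surely, and $\E[Y \mid \mF] \leq 0$ for some sub-$\sigma$-algebra $\mF$, then $\E\!\left[e^{sY} \mid \mF\right] \leq e^{s^2 a^2 / 2}$. I would prove this by bounding $e^{sY}$ above by the chord of the exponential function on $[-a,a]$, namely $e^{sY} \leq \tfrac{a-Y}{2a} e^{-sa} + \tfrac{a+Y}{2a} e^{sa}$, and taking conditional expectations. Since the coefficient $\tfrac{e^{sa} - e^{-sa}}{2a}$ of $Y$ is nonnegative and $\E[Y \mid \mF] \leq 0$, this yields $\E[e^{sY} \mid \mF] \leq \cosh(sa)$, and a term-by-term comparison of Taylor series gives $\cosh(u) \leq e^{u^2/2}$.

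Applying this lemma to $Y = D_i$ with $\mF = \mF_{i-1}$ is valid because $D_i$ is $\mF_i$-measurable with $|D_i| \leq a_i$ by hypothesis, and $\E[D_i \mid \mF_{i-1}] = \E[X_i \mid \mF_{i-1}] - X_{i-1} \leq 0$ since $(X_i)$ is a supermartingale. Iterating the tower property then gives
\[
\E\!\left[ e^{s(X_t - X_0)} \right] = \E\!\left[ e^{s(X_{t-1} - X_0)} \, \E[e^{s D_t} \mid \mF_{t-1}] \right] \leq e^{s^2 a_t^2 / 2} \, \E\!\left[ e^{s(X_{t-1} - X_0)} \right],
\]
which telescopes to $\E[e^{s(X_t - X_0)}] \leq \exp\!\left( \tfrac{s^2}{2} \sum_{i=1}^t a_i^2 \right)$. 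Substituting this into the Markov bound and choosing $s = \lambda / \sum_{i=1}^t a_i^2$ optimizes the exponent to $-\lambda^2 / \bigl(2 \sum_{i=1}^t a_i^2\bigr)$, matching the claimed inequality.

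The only delicate point is the correct use of the supermartingale hypothesis inside the chord bound: one must verify that the coefficient multiplying $\E[Y \mid \mF]$ has the correct sign, so that the inequality $\E[Y \mid \mF] \leq 0$ preserves (rather than reverses) the upper bound and lets one drop the linear-in-$Y$ term. All remaining steps — the chord inequality, the Taylor comparison $\cosh(u) \leq e^{u^2/2}$, the tower property, and the optimization in $s$ — are standard, so the proof reduces to verifying this one sign condition in the Hoeffding step.
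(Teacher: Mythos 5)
Your argument is correct: the chord (convexity) bound combined with $\E[D_i \mid \mF_{i-1}] \leq 0$ and $\sinh(sa_i)/a_i \geq 0$ for $s>0$ gives $\E[e^{sD_i}\mid\mF_{i-1}] \leq \cosh(sa_i) \leq e^{s^2a_i^2/2}$, and the tower-property telescoping plus the choice $s = \lambda/\sum_{i=1}^t a_i^2$ yields exactly the claimed bound. Note that the paper does not prove this statement at all --- it is quoted as Lemma 1 of Wormald's differential-equation-method paper --- so there is nothing to compare against; your proof is the standard Azuma--Hoeffding argument for supermartingales and is complete as sketched.
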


\begin{proof}[Proof of Proposition \ref{prop:dynamic concentration}]
	For convenience, we define the function
	\[
	s(t) = n p(t)^3.
	\]
	
	Let the stopping time $\tau$ be the smallest time $t \leq T$ at which there exists some $\ell \in \mL$ that is unoccupied and $\left| S_\ell(t) - s(t) \right| > \varepsilon (t)$. Set $\tau = T+1$ if there is no such time. It suffices to show that $\tau = T+1$ with sufficiently high probability. 
	
	Note that for  $t < \tau$, there are $n-t = np(t)$ unoccupied rows. In each, there are $s(t) \pm \varepsilon(t) = np(t)^3 \pm \varepsilon(t)$ available positions (where we have used the fact that $\tau > t$). Thus, 
    \begin{equation}\label{eq:total available}
    |\mA(t)| = np(t) \left( np(t)^3 \pm \varepsilon(t) \right) = n^2p(t)^4 \pm n p(t) \varepsilon(t).
    \end{equation} 
	
	For every $\ell \in \mL$, we also define the time $\tau_\ell$ as the minimum between $\tau$ and the last time at which line $\ell$ is unoccupied. For every $\ell \in \mL$, we define the shifted random variables
	\[
	S_\ell^+(t) = 
	\begin{cases}
	S_\ell(t) - s(t) - \frac{1}{2}\varepsilon(t) & \tau_\ell \geq t\\
	S_\ell^+(t-1) & \text{otherwise}
	\end{cases}
	\]
	and
	\[
	S_\ell^-(t) = 
	\begin{cases}
	s(t) - S_\ell(t) - \frac{1}{2}\varepsilon(t) & \tau_\ell \geq t\\
	S_\ell^-(t-1) & \text{otherwise}
	\end{cases}.
	\]
	The motivation of freezing the random variables at time $\tau_\ell$ is to ease the calculation of the expected one-step change: On the one hand, if $t < \tau$, then $Q(t)$ has a regularity that facilitates this calculation. On the other, if $t \geq \tau$, then the one-step change is definitionally zero. Additionally, at the moment $\ell$ becomes occupied, $S_\ell$ jumps to $0$. Thus, we cannot expect $S_\ell$ itself to follow a smooth trajectory.
	
	We note that $\tau \leq T$ only if for some $\ell \in \mL$ and some $0 \leq t \leq T$ one of $S_\ell^+(t)$ or $S_\ell^-(t)$ is greater than $\varepsilon(t) / 2$. This is the event whose probability we will bound.
	
	We now show that $\{S_\ell^+(t)\}_{t=0}^T$ and $\{S_\ell^-(t)\}_{t=0}^T$ are supermartingales with respect to the filtration $\{Q(t)\}_{t=0}^T$ induced by the random greedy algorithm. We first observe that for every $t<T$ it holds that $| S_\ell^+(t+1) - S_\ell^+(t) |, | S_\ell^-(t+1) - S_\ell^-(t) | = O(1)$. Indeed, when placing a queen on the board, it occupies exactly four lines. Each of these intersects a given line in exactly one position. Thus, $|S_\ell(t+1)-S_\ell(t)| \leq 4$. Additionally, by their definitions, $s(t)$ and $\varepsilon(t)$ change by only $O(1)$ in each time step.
	
	Next, we calculate expected one-step changes. If $t \geq \tau_\ell$ then, by definition, the expected change is $0$. It remains to show that, conditioning on $\tau > t$ and line $\ell$ remaining unoccupied at time $t+1$, the expected one-step change is non-positive. Consider a position $(r,c)$ in line $\ell$ that is available at time $t$. It becomes unavailable at time $t+1$ if and only if one of the other three lines containing it is occupied at time $t$. Since $\tau > t$, each of these lines contains $s(t) \pm \varepsilon(t)$ available positions, and they intersect only at $(r,c)$. Let $B$ be the event that line $\ell$ is unoccupied at time $t+1$. Observe that conditioning on $\tau > t$ and $B$ implies that $(x_{t+1},y_{t+1})$ is chosen uniformly from among 
	\[
	|\mA(t)| - S_\ell(t) \stackrel{\text{\eqref{eq:total available}, $t<\tau$}}{=} n^2p(t)^4 - np(t)^3 \pm \left(np(t)\varepsilon(t) + \varepsilon(t) \right) = n^2p(t)^4 \pm 3np(t)\varepsilon(t)
	\]
	positions. Finally, since we condition on $B$, there are $3(s(t) \pm \varepsilon(t)) \pm O(1) = 3s(t) \pm 4\varepsilon(t)$ positions that, if chosen at time $t+1$, make $(r,c)$ unavailable. Thus:
	\[
	\Prob \left[ (r,c) \notin \mA(t+1) \given \tau > t, B \right] = \frac{3s(t) \pm 4\varepsilon(t)}{n^2p(t)^4 \pm 3np(t)\varepsilon(t)} = \frac{3}{np(t)} \left( 1 \pm \frac{5\varepsilon(t)}{np(t)^3} \right).
	\]
	Therefore
	\begin{align*}
	\E \left[ S_\ell(t+1) - S_\ell(t) \given \tau > t, B \right] & = - S_\ell(t) \frac{3}{np(t)} \left( 1 \pm \frac{5\varepsilon(t)}{np(t)^3} \right)\\
	& \stackrel{\tau > t}{=} - \left( s(t) \pm \varepsilon(t) \right) \frac{3}{np(t)} \left( 1 \pm \frac{5\varepsilon(t)}{np(t)^3} \right)\\
	& = - 3p(t)^2 \left( 1 \pm \frac{7\varepsilon(t)}{np(t)^3} \right)
	= - 3 p(t)^2 \pm \frac{21 \varepsilon(t)}{np(t)}.
	\end{align*}
	We also observe that
	\[
	s(t+1) - s(t) = -\frac{3 n p(t)^2}{n} \pm \frac{6}{n} = - 3 p(t)^2 \pm \frac{\varepsilon(t)}{np(t)}
	\]
	and that
	\[
	\frac{1}{2} \left( \varepsilon(t+1) - \varepsilon(t) \right) = \frac{50 \varepsilon(t)}{2np(t)} \pm \frac{\varepsilon(t)}{np(t)}.
	\]
	Therefore:
	\[
	\E \left[ S_\ell^+(t+1) - S_\ell^+(t) \given \mF_t \right] = -3 p(t)^2 + 3 p(t)^2 - \frac{50 \varepsilon(t)}{2np(t)} \pm \frac{23 \varepsilon(t)}{np(t)} \leq 0
	\]
	and
	\[
	\E \left[ S_\ell^-(t+1) - S_\ell^-(t) \given \mF_t \right] = 3 p(t)^2 - 3 p(t)^2 - \frac{50 \varepsilon(t)}{2np(t)} \pm \frac{23 \varepsilon(t)}{np(t)} \leq 0.
	\]
	We conclude that $\{S_\ell^+(t)\}_{t=0}^T$ and $\{S_\ell^-(t)\}_{t=0}^T$ are supermartingales with maximal one-step change $O(1)$.
	
	We now apply Theorem \ref{thm:azuma} with $\lambda = \varepsilon(t) / 2$ to conclude that for every $0 \leq t \leq T$:
	\begin{align*}
	\Prob \left[ S_\ell^+(t) \geq S_\ell^+(0) + \varepsilon(t)/2  \right] & \leq \exp \left( - \Omega \left( \frac{n^{1.02}}{np(t)^{100}} \right)  \right)\\
	& \leq \exp \left( - \Omega \left( \frac{n^{1.02}}{np(T)^{100}} \right)  \right)
	= \exp \left( - \Omega \left( n^{0.01} \right) \right).
	\end{align*}
	Similarly:
	\[
	\Prob \left[ S_\ell^-(t) \geq S_\ell^-(0) + \varepsilon(t)/2 \right] \leq \exp \left( - \Omega \left( n^{0.01} \right) \right).
	\]
	Hence:
	\[
	\Prob \left[ \tau \leq T \right] \leq \exp \left( - \Omega \left( n^{0.01} \right) \right),
	\]
	completing the proof.
\end{proof}

\section{Absorbers}\label{sec:absorbers}

In this section we analyze Algorithm \ref{alg:absorption}. We wish to show that it is unlikely to abort. The next lemma provides a sufficient condition.

\begin{definition}\label{def:epsilon absorbing}
	Let $\ell > 0$. A partial $n$-queens configuration $Q$ is \termdefine{$\ell$-absorbing} if for every $(r,c) \in [n]^2$, it holds that $\left|\mB_Q(r,c)\right| \geq \ell$.
\end{definition}

\begin{lem}\label{lem:absorbing procedure works}
	Suppose $|Q(T)| = T$ and $Q(T)$ is $10 (n-T)$-absorbing. Then Algorithm \ref{alg:absorption} does not abort.
\end{lem}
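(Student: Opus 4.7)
The plan is to prove the lemma by induction on $i \in \{1, \ldots, k\}$ (with $k = n - T$), showing that the absorbers $(x_j, y_j)$ can be chosen at every step $j$ so as to keep $\mB_i$ nonempty at each subsequent step $i \geq j$. The key point is that if at step $j$ one of the two newly placed queens $(r_j, y_j)$ or $(x_j, c_j)$ happens to lie on a diagonal of some pending target $(r_i, c_i)$, then \emph{every} absorber of $(r_i, c_i)$ is wiped out simultaneously, since the second absorber condition requires $(r_i, c_i)$'s diagonals to be unoccupied. I call this a \textit{bad event} for target $i$, and the strategy is to pick each absorber so as to avoid bad events for all future targets.

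Set $b_i(j) \coloneqq |\mB_{R(j)}(r_i, c_i)|$; by hypothesis $b_i(0) \geq 10k$ for every $i$. Absent a bad event for target $i$ at step $j$, the per-step decrease $b_i(j-1) - b_i(j)$ is at most $5$: the removal of $(x_j, y_j)$ costs at most one absorber of $(r_i, c_i)$, while each of the two new queens $(r_j, y_j), (x_j, c_j)$ newly occupies two diagonals, each of which coincides with the diagonal of at most one existing queen of $R(j-1)$ (a partial queens configuration has at most one queen per diagonal), collectively destroying at most four further absorbers.

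The heart of the argument is a union bound showing that at every step $j$ one can choose $(x_j, y_j) \in \mB_j$ triggering no bad event for any future target. Fix a future target $(r_i, c_i)$. The requirement that $(r_j, y_j)$ avoid both diagonals of $(r_i, c_i)$ forbids at most two values of the column $y_j$, and each forbidden column value pins down at most one queen of $R(j-1)$, hence at most one dangerous element of $\mB_j$; the identical estimate applies to $x_j$ and $(x_j, c_j)$. Summing over the $k - j$ future targets bounds the number of \textit{dangerous} elements of $\mB_j$ by $4(k-j)$, so a safe choice is available whenever $|\mB_j| > 4(k-j)$.

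To close the induction, assume safe choices have been made at steps $1, \ldots, j-1$. Then $b_i(j-1) \geq 10k - 5(j-1) \geq 5k + 5$ for every $i \geq j$, and in particular $|\mB_j| \geq 5k + 5 > 4(k-j)$, so a safe choice is indeed available at step $j$. Iterating, $\mB_i$ is nonempty at every step $i \leq k$ and the algorithm does not abort. The main obstacle is ruling out the bad-event scenario, which would otherwise let a single absorption wipe out every absorber of some pending target; the slack between $5k+5$ and $4k$ is precisely why the multiplicative constant $10$ in the absorbing hypothesis suffices.
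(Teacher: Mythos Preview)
Your proof rests on a misreading of condition (b) in the absorber definition. What is actually required (and what the paper's own proof of the lemma uses) is that the four diagonals through $(r,y)$ and $(x,c)$ be unoccupied --- \emph{not} the diagonals through $(r,c)$. (The informal statement of (b) in the paper appears to contain a typo; note that ``the diagonals of $(x,y)$ are unoccupied'' is vacuously false since $(x,y)\in Q$.) With the correct condition your ``bad event'' is harmless: a new queen landing on a diagonal of $(r_i,c_i)$ does not touch any of the four diagonals relevant to absorbers of $(r_i,c_i)$, so nothing is wiped out. On the other hand, your per-step bound of $5$ is too optimistic. A newly placed queen $(q_r,q_c)$ destroys an absorber $(x,y)$ of $(r_i,c_i)$ whenever $(q_r,q_c)$ lies on a diagonal of $(r_i,y)$ or of $(x,c_i)$; the four resulting equations $q_r+q_c=r_i+y$, $q_r-q_c=r_i-y$, $q_r+q_c=x+c_i$, $q_r-q_c=x-c_i$ each determine one coordinate of $(x,y)$ and hence at most one queen of the configuration, so each added queen kills up to $4$ absorbers (not $2$), giving a per-step loss of at most $1+4+4=9$. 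Your argument (``each new diagonal contains at most one existing queen'') counts queens lying \emph{on} the new diagonals, but the destroyed absorber $(x,y)$ need not itself lie there --- it is $(r_i,y)$ or $(x,c_i)$ that must.

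The paper's proof is accordingly much simpler: one shows directly that $|\mB_{R(t)}(r,c)|\ge 10k-9t$ for every $(r,c)$ and every $0\le t<k$, with no need to steer the choice of absorber at each step. At $t=k-1$ this still leaves at least $k+9>0$ absorbers, so the algorithm never aborts regardless of which element of $\mB_i$ is selected.
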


\begin{proof}
	Let $k = n-T$. Algorithm \ref{alg:absorption} aborts only if for some $t$ we have $\left| \mB_{R(t)}(r_t,c_t) \right| = 0$. Thus, if for every $0 \leq t < k$ it holds that $\left| \mB_{R(t)}(r_t,c_t) \right| \geq 10k - 9t$, then the algorithm does not abort.
	
	We prove this inductively. Let $(r,c) \in [n]^2$. By assumption, $|\mB_{R(0)}(r,c)| \geq 10k$. Now assume that for $0 \leq t < k-1$ it holds that $|\mB_{R(t)}| \geq 10k - 9t$. We wish to show that $|\mB_{R(t+1)}| \geq 10k - 9t - 9$. We note that $R(t+1)$ is obtained from $R(t)$ by removing one queen and adding two queens. Thus, the conclusion follows from the following two observations regarding partial $n$-queens configurations $P_1, P_2 \subseteq [n]^2$:
	\begin{itemize}
		\item If $P_2$ is obtained from $P_1$ by removing a queen then $|\mB_{P_2}(r,c)| \geq |\mB_{P_1}(r,c)| - 1$. Indeed, if $(x,y) \in \mB_{P_1}(r,c)$ then it does not share a diagonal with $(r,c)$ and the four diagonals passing through $(r,y),(x,c)$ are unoccupied. Since $P_2 \subseteq P_1$, these facts hold in $P_2$ as well. Thus, if $(x,y) \in P_2$ then $(x,y) \in \mB_{P_2}(r,c)$. Since $|P_1 \setminus P_2| = 1$, the conclusion follows.
		
		\item If $P_2$ is obtained from $P_1$ by adding a queen then $|\mB_{P_2}(r,c)| \geq |\mB_{P_1}(r,c)| - 4$. Indeed, suppose $(x,y) \in \mB_{P_1}(r,c) \setminus \mB_{P_2}(r,c)$. Let $(q_r,q_c)$ be the additional queen that is present in $P_2$. By assumption, $(x,y)$ does not share a diagonal with $(r,c)$ and $(x,y) \in P_2$. Since $(x,y) \notin \mB_{P_2}(r,c)$, this means that one of the diagonals containing $(r,y)$ or $(x,c)$ is occupied by $(q_r,q_c)$. Therefore, one of the following holds:
		\[
		q_r+q_c = r+y,\quad q_r+q_c = x+c,\quad q_r-q_c = r-y, \quad q_r-q_c = x-c.
		\]
		Since $r,c,q_r,q_c$ are fixed, each equality has at most one solution in $[n]$. Now, for each $x$ that solves one of the equations, there is at most one $y$ such that $(x,y) \in P_1$ (since $P_1$ contains at most one queen in each column). Similarly, for each $y$ solving one of the equations there is at most one $x$ such that $(x,y) \in P_1$. Therefore there are at most $4$ absorbers $(x,y) \in \mB_{P_1}(r,c)$ for which one of the equalities holds.
	\end{itemize}
\end{proof}

The next lemma asserts that w.h.p.\ $Q(T)$ contains $\Omega(n)$ absorbers for every position. By Proposition \ref{prop:dynamic concentration}, w.h.p.\ $n - |Q(T)| = n - T = o(n)$. Thus, it then follows from Lemma \ref{lem:absorbing procedure works} that Algorithm \ref{alg:absorption} succeeds in constructing an $n$-queens configuration.

\begin{lem}\label{lem:an abundance of absorbers}
	There exists a constant $\beta > 0$ such that w.h.p.\ $Q(T)$ is $(\beta n)$-absorbing.
\end{lem}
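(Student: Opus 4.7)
Fix $(r,c) \in [n]^2$. The plan is to show $|\mB_{Q(T)}(r,c)| \geq \beta n$ with probability $1 - \exp(-\Omega(n))$, and then take a union bound over the $n^2$ choices of $(r,c)$. The key observation enabling concentration is that the absorber count $M(t) \coloneqq |\mB_{Q(t)}(r,c)|$ has bounded single-step changes. When a queen is placed at $(x_{t+1},y_{t+1})$, at most four existing absorbers can be destroyed: each critical diagonal of an absorber $(x',y')$, namely $r+y', r-y', x'+c, x'-c$, pins down either $y'$ or $x'$ given the new queen's classical diagonal, and at most one queen of $Q(t)$ occupies any given row or column. The new queen itself can contribute $+1$ to $M(t)$ if it satisfies the absorber conditions, so $|M(t+1)-M(t)| \leq 4$ deterministically.

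Next I would estimate the per-step drift. The expected gain is the fraction of positions in $\mA(t)$ that are not on a line with $(r,c)$ and whose four classical diagonals $r+y, r-y, x+c, x-c$ are unoccupied at time $t$. Extending the dynamic concentration of Proposition \ref{prop:dynamic concentration} to classical diagonals, I would show that this fraction is bounded below by a positive constant $f$ throughout a macroscopic range of $t$; the essential computation is that the sums $\sum_y (n-\ell(r+y))(n-\ell(r-y))$ and $\sum_x (n-\ell(x+c))(n-\ell(x-c))$ (with $\ell(\cdot)$ denoting classical-diagonal length) are $\Omega(n^3)$ uniformly in $r,c \in [n]$, even when $(r,c)$ sits near a corner. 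The expected per-step loss is at most $M(t) \cdot O(1/n)$, since each existing absorber's four critical classical diagonals are hit with probability $O(1/n)$ by a uniformly random available queen. Solving the approximate differential equation $\dot M \approx f - M/n$ yields $\E[M(T)] = \Omega(n)$.

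Concentration then follows from Azuma--Hoeffding applied to $M(t)$ minus its cumulative drift: since one-step changes are $O(1)$, deviations of order $\lambda$ occur with probability $\exp(-\Omega(\lambda^2/T))$. Taking $\lambda$ a small constant fraction of $\E[M(T)]$ gives the desired $\exp(-\Omega(n))$ bound, more than enough for the union bound over the $n^2$ pairs $(r,c)$. The main obstacle I expect is the drift computation: rigorously estimating the fraction of $\mA(t)$ whose four critical classical diagonals are simultaneously unoccupied requires extending Proposition \ref{prop:dynamic concentration} to track classical diagonals, which (unlike toroidal ones) have varying lengths. This should be amenable to the same supermartingale methodology, but care is needed to maintain uniform control of error terms across all $(r,c)$, particularly for positions whose critical diagonals are long.
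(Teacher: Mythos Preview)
Your outline is the ``direct tracking'' approach that the paper explicitly mentions and then deliberately avoids: right after stating the lemma, the authors write that tracking $|\mB_{Q(t)}(r,c)|$ throughout the process ``would give a precise count \ldots\ but at the cost of significant technical work,'' and they instead ``take a different tack, giving a shorter proof at the expense of a suboptimal $\beta$.'' So your strategy is not wrong in spirit, but it is not the paper's, and the obstacle you flag at the end is exactly the reason they chose another route.

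The paper's argument bypasses the drift computation entirely. They introduce \emph{safe absorbers}: an absorber $(x,y)$ for $(r,c)$ is safe if the four \emph{toroidal} diagonals through $(r,y)$ and $(x,c)$ are already occupied (necessarily by queens lying in $\ellfar{i}$, outside the classical diagonal). The point is monotonicity: once an absorber is safe in a partial toroidal configuration $Q_1$, it remains safe in any toroidal extension $Q_2 \supseteq Q_1$, because no further queen can land on those toroidal diagonals. This lets them forget the dynamics after an early time. Concretely, they couple the greedy process to a binomial random subset $R \subseteq [n]^2$ of density $p = 1/(4n)$, set $\tR$ to be the positions of $R$ not threatened by any other element of $R$, show $\tR \subseteq Q(T)$ w.h.p., and then prove (by a static bounded-differences inequality on the i.i.d.\ Bernoulli variables defining $R$) that $\tR$ already contains $\Omega(n)$ safe absorbers for every $(r,c)$. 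The ``balanced position'' computation you would need for your drift lower bound appears here too, but only once, in a binomial model where independence makes it clean.

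What each approach buys: yours would in principle give a sharper value of $\beta$ and a more precise trajectory for $|\mB_{Q(t)}(r,c)|$, but it requires extending Proposition~\ref{prop:dynamic concentration} to classical diagonals (whose varying lengths break the uniformity that makes the toroidal analysis tractable) and controlling a drift that depends on more than just $M(t)$ and $t$---different absorbers have different destruction probabilities depending on the lengths and current occupancy of their critical diagonals, so the ODE heuristic $\dot M \approx f - M/n$ hides a system of coupled quantities. The paper's approach trades the constant for a short, self-contained argument that never needs to track classical diagonals dynamically.
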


The intuition is that $Q(T)$ contains approximately $n$ queens, each occupying a single diagonal of each type (that is, one diagonal from upper-left to lower-right and one from upper-right to lower-left). However, the grid $[n]^2$ contains approximately $2n$ diagonals of each type. Therefore, if one chooses a diagonal uniformly at random the probability that it is unoccupied is approximately $1/2$. If we fix $(r,c)$ and choose $(x,y) \in Q(T)$ uniformly at random, we might imagine that the (four) diagonals containing $(r,y)$ and $(x,c)$ are distributed uniformly at random, which would imply that with constant probability they are unoccupied.

One approach to proving Lemma \ref{lem:an abundance of absorbers} is to track $|\mB_{Q(t)}(r,c)|$ for each $(r,c) \in [n]^2$ and each $0 \leq t \leq T$. This would give a precise count of $|\mB_{Q(t)}(r,c)|$ but at the cost of significant technical work. We take a different tack, giving a shorter proof at the expense of a suboptimal $\beta$. We show that already at an early stage of the process w.h.p.\ there are $\Omega(n)$ absorbers for each position. Furthermore, each of these absorbers is guaranteed to be present in $Q(T)$. We begin by identifying a combinatorial property that ensures an absorber survives throughout the random greedy process.

We will need the following notation: Let $(r,c) \in [n]^2$. Let $\elltor{1}(r,c) \coloneqq \{ (x,y) \in [n]^2 : r+c = x+y \bmod n \},\elltor{2}(r,c) \coloneqq \{ (x,y) \in [n]^2 : r-c = x-y \bmod n \}$ be the two toroidal diagonals incident to $(r,c)$, and let $\ell_1(r,c) \subseteq \elltor{1}(r,c), \ell_2(r,c) \subseteq \elltor{2}(r,c)$ be the two non-toroidal diagonals incident to $(r,c)$. For $i=1,2$ we also define $\ellfar{i}(r,c) \coloneqq \elltor{i}(r,c) \setminus \ell_i(r,c)$.

\begin{definition}
	Let $Q \subseteq [n]^2$ be a partial $n$-queens configuration. Let $(r,c) \in [n]^2$. An absorber $(x,y) \in \mB_Q(r,c)$ is \termdefine{safe} if the four toroidal diagonals containing $(r,y)$ and $(x,c)$ are all occupied by $Q$. Equivalently, $(x,y) \in \mB_Q(r,c)$ is safe if there exist $a_1 \in \ellfar{1}(r,y), a_2 \in \ellfar{2}(r,y), b_1 \in \ellfar{1}(x,c), b_2 \in \ellfar{2}(x,c)$ such that $a_1,a_2,b_1,b_2 \in Q$. (see Figure \ref{fig:safe absorbers}).
	
	More generally, if $R \subseteq [n]^2$ (i.e., $R$ is not necessarily a partial queens configurations) and $(r,c) \in [n]^2$, we say that $(x,y) \in R$ is a safe absorber for $(r,c)$ if $(x,y)$ does not share a diagonal with $(r,c)$, there exist $a_1 \in \ellfar{1}(r,y), a_2 \in \ellfar{2}(r,y), b_1 \in \ellfar{1}(x,c), b_2 \in \ellfar{2}(x,c)$ such that $(x,y),a_1,a_2,b_1,b_2 \in R$ and no other element of $R$ shares a line with any of $(x,y),a_1,a_2,b_1,b_2 \in R$.
	
	We denote the set of safe absorbers for $(r,c)$ in $R$ by $\mS_R(r,c)$.
\end{definition}

	\begin{figure}
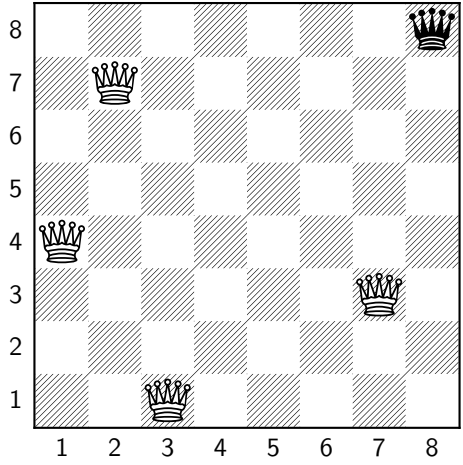

	\centering
	{   \def\whitepieces{qa4, qb7, qc1, qg3}
		\def\blackpieces{qh8}
		\chessboard[ showmover=false,labelbottomformat=\arabic{filelabel}, setwhite=\whitepieces,addblack=\blackpieces]}
	
	\caption{In the above partial $8$-queens configuration, the black queen at $(8,8)$ is a safe absorber for the square $(5,4)$. The four white queens occupy the toroidal diagonals incident to $(5,8)$ and $(8,4)$, while the (non-toroidal) diagonals incident to those squares are unoccupied.}
	\label{fig:safe absorbers}
\end{figure}

The crucial property of safe absorbers is the following.

\begin{obs}\label{obs:safe absorbers survive}
	Let $Q_1 \subseteq Q_2 \subseteq [n]^2$ be partial toroidal $n$-queens configurations. Then, for every $(r,c) \in [n]^2$, it holds that $\mS_{Q_1}(r,c) \subseteq \mS_{Q_2}(r,c)$.
\end{obs}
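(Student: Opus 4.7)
The plan is to work with the second (``$R$-version'') formulation of safe absorber given just before the observation, and use the same witnesses for $Q_1$ and $Q_2$, after observing that the isolation clause in that formulation is automatically satisfied inside any partial toroidal $n$-queens configuration.

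Concretely, suppose $(x,y) \in \mS_{Q_1}(r,c)$. Unpacking the definition yields witnesses $a_1 \in \ellfar{1}(r,y)$, $a_2 \in \ellfar{2}(r,y)$, $b_1 \in \ellfar{1}(x,c)$, $b_2 \in \ellfar{2}(x,c)$, all lying in $Q_1$. The same five positions automatically lie in $Q_2$ since $Q_1 \subseteq Q_2$, and the non-diagonal relation between $(x,y)$ and $(r,c)$ is an intrinsic statement about the positions $(r,c), (x,y) \in [n]^2$ that does not depend on which configuration they sit in. What remains is the isolation clause in $Q_2$: that no element of $Q_2 \setminus \{(x,y), a_1, a_2, b_1, b_2\}$ shares a row, column, or toroidal diagonal with any of the five witnesses.

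This is where the hypothesis that $Q_2$ is a partial toroidal $n$-queens configuration does all the work. By definition, every queen in $Q_2$ is the unique queen in each of its four lines (row, column, and two toroidal diagonals), so no two distinct queens of $Q_2$ share any line at all. In particular, no queen of $Q_2 \setminus \{(x,y), a_1, a_2, b_1, b_2\}$ shares a line with any of $(x,y), a_1, a_2, b_1, b_2$, so the isolation clause holds and $(x,y) \in \mS_{Q_2}(r,c)$.

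I do not expect a genuine obstacle here: once the general definition of safe absorber is invoked, the observation reduces to the essentially bookkeeping remark that enlarging a partial toroidal configuration cannot introduce line conflicts with queens already in it, so the witnesses certifying safeness in $Q_1$ continue to certify it in $Q_2$.
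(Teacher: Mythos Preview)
Your argument is correct and is essentially the same as the paper's, just filtered through the second (``$R$-version'') formulation of safe absorber rather than the first. The paper phrases the key step as ``the occupied toroidal diagonals through $(r,y)$ and $(x,c)$ can gain no new queen in $Q_2$, so the non-toroidal parts stay empty,'' while you phrase it as ``the isolation clause is automatic in any partial toroidal configuration''; both are immediate consequences of the same fact that a partial toroidal configuration has at most one queen per line.
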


\begin{proof}
	Since $Q_2$ is a partial toroidal $n$-queens configuration, every toroidal diagonal contains at most a single queen. Since $Q_1 \subseteq Q_2$, if a toroidal diagonal is occupied in $Q_1$ then it will contain no additional queens in $Q_2$.
	
	Let $(r,c) \in [n]^2$ and suppose $(x,y) \in \mS_{Q_1}(r,c)$. Then, by definition, the toroidal diagonals incident to $(r,y)$ and $(x,c)$ are occupied in $Q_1$, while the (non-toroidal) diagonals incident with these positions are unoccupied. Hence, this remains so in $Q_2$ as well, implying $(x,y) \in \mS_{Q_2}(r,c)$.
\end{proof}

In order to prove Lemma \ref{lem:an abundance of absorbers} we will show that for some $0 \leq t \leq T$, $Q(t)$ contains $\Omega(n)$ safe absorbers for every $(r,c) \in [n]^2$. In order to gain a heuristic understanding we consider the situation in a random binomial subset of $[n]^2$. This calculation will also be used in the proof of Lemma \ref{lem:an abundance of absorbers}.

\begin{clm}\label{clm:safe absorber probability}
	Let $R \subseteq [n]^2$ be a random binomial subset where each position is present with probability $p \in [0,1]$, independently of other positions. Let $(r,c),(x,y) \in [n]^2$ be two positions that do not share a line. Then:
	\[
	\Prob [ (x,y) \in \mS_R(r,c) ] \geq |\ellfar{1}(r,y)| |\ellfar{2}(r,y)| |\ellfar{1}(x,c)| |\ellfar{2}(x,c)| p^5 (1-p)^{20n} - O \left( \frac{1}{n^2} \right).
	\]
\end{clm}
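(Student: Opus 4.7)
The plan is to decompose the event $\{(x,y) \in \mS_R(r,c)\}$ as an essentially disjoint union indexed by the possible witnesses to safety, and sum the probabilities. For each tuple $\mathbf{a} = (a_1, a_2, b_1, b_2) \in \ellfar{1}(r,y) \times \ellfar{2}(r,y) \times \ellfar{1}(x,c) \times \ellfar{2}(x,c)$, set $S_{\mathbf{a}} := \{(x,y), a_1, a_2, b_1, b_2\}$ and let $A_{\mathbf{a}}$ be the event that $S_{\mathbf{a}} \subseteq R$ and no element of $R \setminus S_{\mathbf{a}}$ shares a line with any element of $S_{\mathbf{a}}$. Then $(x,y) \in \mS_R(r,c)$ if and only if some $A_{\mathbf{a}}$ holds.

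Call $\mathbf{a}$ \emph{good} if the five points of $S_{\mathbf{a}}$ are distinct and each of $a_1, a_2, b_1, b_2$ lies in exactly one of the four target sets. Note that $(x,y) \notin \elltor{1}(r,y) \cup \elltor{2}(r,y) \cup \elltor{1}(x,c) \cup \elltor{2}(x,c)$ because $(x,y)$ does not share a line with $(r,c)$. For two different good tuples $\mathbf{a} \neq \mathbf{a}'$, the events $A_{\mathbf{a}}, A_{\mathbf{a}'}$ are disjoint: if, say, $a_1 \neq a_1'$ and both events held, then $a_1, a_1' \in R$ would both lie on the toroidal diagonal $\elltor{1}(r,y)$ and thus share a line, but by the goodness of $\mathbf{a}$ the only element of $S_{\mathbf{a}}$ in $\ellfar{1}(r,y)$ is $a_1$, hence $a_1' \notin S_{\mathbf{a}}$, contradicting the condition of $A_{\mathbf{a}}$. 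For each good $\mathbf{a}$, the 5 points of $S_{\mathbf{a}}$ lie on at most 20 lines each of length $n$, so the set $U_{\mathbf{a}}$ of positions in $[n]^2 \setminus S_{\mathbf{a}}$ sharing a line with some point of $S_{\mathbf{a}}$ satisfies $|U_{\mathbf{a}}| \leq 20n$, giving $\Prob[A_{\mathbf{a}}] = p^5 (1-p)^{|U_{\mathbf{a}}|} \geq p^5 (1-p)^{20n}$.

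It remains to count bad tuples. Opposite-type intersections (e.g., $\ellfar{1}(r,y) \cap \ellfar{2}(r,y)$) have $O(1)$ elements, while same-type intersections (e.g., $\ellfar{1}(r,y) \cap \ellfar{1}(x,c)$) are either empty or, in the degenerate cases $r+y \equiv x+c \pmod{n}$ or $r-y \equiv x-c \pmod{n}$, contain $O(n)$ elements. Fixing one coordinate into any such intersection and letting the other three range freely yields $O(n^3)$ tuples overall. Hence, writing $N$ for the product of the four target-set sizes,
\[
\Prob[(x,y) \in \mS_R(r,c)] = \sum_{\text{good } \mathbf{a}} \Prob[A_{\mathbf{a}}] \geq (N - O(n^3)) p^5 (1-p)^{20n} = N p^5 (1-p)^{20n} - O(n^3) p^5 (1-p)^{20n}.
\]
Finally, $p^5(1-p)^{20n} \leq p^5 e^{-20np}$ is uniformly $O(n^{-5})$ on $p \in [0,1]$ (achieving its maximum near $p = 1/(4n)$), so $O(n^3) p^5 (1-p)^{20n} = O(n^{-2})$, completing the proof. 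The main obstacle is the careful bookkeeping needed to control bad tuples in the degenerate cases where toroidal diagonals align modulo $n$.
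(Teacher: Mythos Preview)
Your overall architecture is sound and gives a clean alternative to the paper's argument. The paper does not sum disjoint events over witness tuples; instead it splits into the event $A$ that $(x,y)\in R$ is isolated on its lines and the event $B$ that a witnessing quadruple exists, observes that $\Prob[B\mid A]\ge\Prob[B]$ (since $(x,y)$ lies on none of the four far diagonals), and multiplies. Your direct disjoint-sum approach avoids that positive-correlation step at the cost of the disjointness bookkeeping, which you carry out correctly.

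There is, however, one genuine slip in your bad-tuple count. You write that the same-type intersections $\ellfar{1}(r,y)\cap\ellfar{1}(x,c)$ and $\ellfar{2}(r,y)\cap\ellfar{2}(x,c)$ may have $O(n)$ elements in the degenerate cases $r+y\equiv x+c$ or $r-y\equiv x-c \pmod n$, and then assert that ``fixing one coordinate into any such intersection and letting the other three range freely yields $O(n^3)$ tuples overall.'' If the intersection truly had $O(n)$ elements, this would give $O(n^4)$ bad tuples, not $O(n^3)$, and the error term would only be $O(1/n)$, which is too weak. The fix is to notice that these degenerate cases \emph{cannot occur}: $r+y\equiv x+c\pmod n$ is equivalent to $r-c\equiv x-y\pmod n$, i.e.\ $(r,c)$ and $(x,y)$ lie on a common toroidal diagonal of type $2$; similarly $r-y\equiv x-c$ forces a common type-$1$ toroidal diagonal. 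Both are excluded by the hypothesis that $(r,c)$ and $(x,y)$ do not share a line. Hence the same-type target sets are in fact disjoint, every pairwise intersection among the four target sets has size $O(1)$, and your $O(n^3)$ count is correct for the right reason. (The paper's sentence ``every pair of lines intersects in at most one position'' is implicitly using the same observation.)

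Two minor points: the displayed ``$=$'' in your final chain should be ``$\ge$'', since a bad tuple could in principle witness safety without any good tuple doing so; and ``the main obstacle'' you flag at the end is precisely the step that dissolves once you make the observation above.
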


\begin{proof}
	Instead of asking for the probability that $(x,y) \in \mS_{R}(r,c)$, we will instead estimate the probabilities of the following events.
	\begin{itemize}
		\item Let $A$ be the event that $(x,y) \in R$ and no other position sharing a line with $(x,y)$ is in $R$. Then
		\[
		\Prob [A] = p(1-p)^{4n-1} \geq p(1-p)^{4n}.
		\]
		
		\item Let $B$ be the event that there exist distinct positions $a_1 \in \ellfar{1}(r,y),a_2 \in \ellfar{2}(r,y), b_1 \in \ellfar{1}(x,c), b_2 \in \ellfar{2}(x,c)$ such that $a_1,a_2,b_1,b_4 \in R$, and no line incident to any of these positions contains more than one element of $R$. Since every pair of lines intersects in at most one position, there are at least 
		\begin{align*}
		\left(|\ellfar{1}(r,y)| - O(1)\right) & \left(|\ellfar{2}(r,y)| - O(1) \right) \left(|\ellfar{1}(x,c)| - O(1)\right) \left(|\ellfar{2}(x,c)| - O(1)\right)\\
		& = |\ellfar{1}(r,y)| |\ellfar{2}(r,y)| |\ellfar{1}(x,c)| |\ellfar{2}(x,c)| - O(n^3)
		\end{align*}
		choices of distinct $a_1 \in \ellfar{1}(r,y),a_2 \in \ellfar{2}(r,y), b_1 \in \ellfar{1}(x,c), b_2 \in \ellfar{2}(x,c)$ that do not share a line with each other or with $(x,y)$. Therefore:
		\begin{align*}
		\Prob [B] & \geq \left( |\ellfar{1}(r,y)| |\ellfar{2}(r,y)| |\ellfar{1}(x,c)| |\ellfar{2}(x,c)| - O(n^3) \right) \left( p (1-p)^{4n} \right)^4\\
		& \geq |\ellfar{1}(r,y)| |\ellfar{2}(r,y)| |\ellfar{1}(x,c)| |\ellfar{2}(x,c)| \left( p (1-p)^{4n} \right)^4 - O \left( \frac{1}{n} \right).
		\end{align*}
	\end{itemize}
	Note that if both $A$ and $B$ occur then $(x,y) \in \mS_R(r,c)$. We observe that $(x,y)$ is not contained in any of the lines $\ellfar{1}(r,y), \ellfar{2}(r,y), \ellfar{1}(x,c), \ellfar{2}(x,c)$. Therefore $\Prob[B \given A] \geq \Prob [B]$. Hence
	\begin{equation*}
	\begin{split}
	\Prob & [ (x,y) \in \mS_R(r,c) ] \geq \Prob \left[A \cap B\right] = \Prob[A]\Prob[B \given A] \geq \Prob [A] \Prob [B]\\
	& \geq p(1-p)^{4n} \left( |\ellfar{1}(r,y)| |\ellfar{2}(r,y)| |\ellfar{1}(x,c)| |\ellfar{2}(x,c)| \left( p (1-p)^{4n} \right)^4 - O \left( \frac{1}{n} \right) \right)\\
	& \geq |\ellfar{1}(r,y)| |\ellfar{2}(r,y)| |\ellfar{1}(x,c)| |\ellfar{2}(x,c)| p^5 (1-p)^{20n} - O \left( \frac{1}{n^2} \right).
	\end{split}
	\end{equation*}
\end{proof}

The preceding calculation suggests that we should consider positions $(x,y)$ such that $|\ellfar{1}(r,y)|, |\ellfar{2}(r,y)|, |\ellfar{1}(x,c)|, |\ellfar{2}(x,c)|, = \Omega(n)$. As we will apply Claim \ref{clm:safe absorber probability} with $p = \Omega(1/n)$, this condition would imply that $\Prob[ (x,y) \in \mS_R(r,c) ] = \Omega(1/n)$. This motivates the next definition.

\begin{definition}
	A position $(x,y) \in [n]^2$ is \termdefine{balanced} if $|\ellfar{1}(x,y)|,|\ellfar{2}(x,y)| \geq n/10$.
\end{definition}

We will now show that for every $(r,c) \in [n]^2$ there are $\Omega(n^2)$ positions $(x,y) \in [n]^2$ such that $(r,c)$ and $(x,y)$ do not share a line and $(r,y)$ and $(x,c)$ are both balanced. Let $S \subseteq [n]^2$ be the set of positions $(x,y)$ such that neither of
\[
x - n/10 \leq y \leq x + n/10, \quad \frac{9}{10}n - x \leq y \leq \frac{11}{10}n - x
\]
holds (see Figure \ref{fig:balanced}).

\begin{figure}
	\centering
	\includegraphics[scale=0.8]{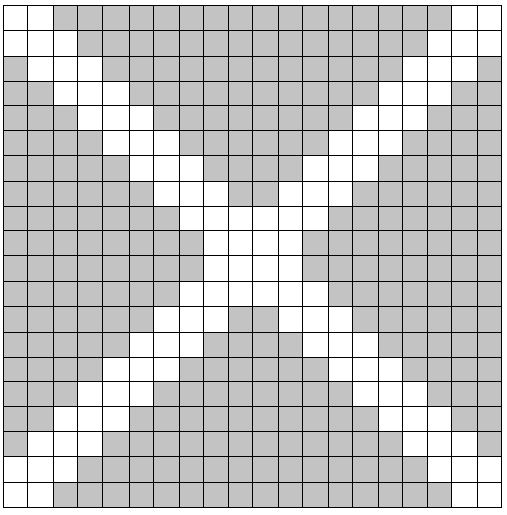}
	\caption{ Roughly speaking, \textit{balanced} means not too close to the main diagonals. In the $20 \times 20$ array above the set $S$ of balanced positions is shaded.}
	\label{fig:balanced}
\end{figure}

\begin{clm}
	Let $(x,y) \in S$. Then $(x,y)$ is balanced.
\end{clm}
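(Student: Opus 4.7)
The plan is to reduce the claim to a purely arithmetic verification by first computing closed-form expressions for $|\ellfar{1}(x,y)|$ and $|\ellfar{2}(x,y)|$ and then matching these against the two inequalities that define $S$.

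For the first step, I would use the fact that every toroidal diagonal $\elltor{i}$ contains exactly $n$ elements of $[n]^2$, so $|\ellfar{i}(x,y)| = n - |\ell_i(x,y)|$. For the ``difference'' diagonal $\ell_2(x,y)$, parametrizing the elements $(x',y')$ by $x'$ under the constraint $x'-y'=x-y$ with $x',y'\in[n]$ shows that $x'$ ranges over an interval of length $n-|x-y|$, so $|\ell_2(x,y)| = n - |x-y|$ and therefore $|\ellfar{2}(x,y)| = |x-y|$. A parallel computation for the ``sum'' diagonal $\ell_1(x,y)$, splitting on whether $x+y \leq n+1$ or $x+y \geq n+1$, yields $|\ell_1(x,y)| = n - |x+y-(n+1)|$ and hence $|\ellfar{1}(x,y)| = |x+y-(n+1)|$.

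For the second step, I would read off the definition of $S$: the hypothesis $(x,y) \in S$ is exactly that neither $|x-y| \leq n/10$ nor $|x+y-n| \leq n/10$ holds, that is, $|x-y| > n/10$ and $|x+y-n| > n/10$. The first inequality is literally $|\ellfar{2}(x,y)| > n/10$. For the second, I would perform a quick case split: if $x+y < 9n/10$, then $|\ellfar{1}(x,y)| = n+1-(x+y) > n/10 + 1$, and if $x+y > 11n/10$, then $|\ellfar{1}(x,y)| = (x+y)-(n+1) > n/10 - 1$, which, combined with the integrality of $x+y$ (and $n$ taken large enough so that the strict inequality $x+y > 11n/10$ forces an integer gap of at least the missing unit), gives $|\ellfar{1}(x,y)| \geq n/10$.

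There is no real obstacle here; the whole claim is an arithmetic unwinding of definitions. The only bookkeeping worth any care is the $\pm 1$ mismatch between the $(n+1)$ appearing naturally in the sum-diagonal length and the ``$n$'' appearing in the definition of $S$, but this is absorbed by the ample slack between $9n/10,\,11n/10$ and $n \pm n/10$ for large $n$.
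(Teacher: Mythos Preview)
Your proposal is correct and follows essentially the same approach as the paper's proof: both first derive the closed forms $|\ellfar{1}(x,y)| = |x+y-(n+1)|$ and $|\ellfar{2}(x,y)| = |x-y|$, and then verify the two defining inequalities of $S$ translate into the required lower bounds, handling the $\pm 1$ discrepancy between $n$ and $n+1$ in the sum-diagonal case in the same way.
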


\begin{proof}
	For every $(x,y) \in [n]^2$, we have $|\ellfar{1}(x,y)| = |x+y-(n+1)|$ and $|\ellfar{2}(x,y)| = |x-y|$.
	
	Let $(x,y) \in S$. Then, by definition, either $y > x+n/10$ or $x > y+n/10$. Both imply $|\ellfar{2}(x,y)| = |x-y| > n/10$. Similarly, either $x+y > 11n/10$ or $x+y < 9n/10$. In the first case, $x+y - (n+1) > n/10 - 1$. In the second, $x+y-(n+1) < -n/10-1$. Both imply $|\ellfar{1}(x,y)| = |x+y-(n+1)| \geq n/10$.
\end{proof}

\begin{clm}\label{clm:S intersection large}
	Each row and each column in $[n]^2$ intersects $S$ on at least $n/2$ positions.
\end{clm}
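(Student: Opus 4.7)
The plan is to fix a row $x_0 \in [n]$ and count the number of $y \in [n]$ such that $(x_0,y) \notin S$, then subtract from $n$. By the definition of $S$, the position $(x_0,y)$ fails to lie in $S$ precisely when one of the two conditions
\[
x_0 - n/10 \leq y \leq x_0 + n/10 \quad \text{or} \quad \tfrac{9}{10}n - x_0 \leq y \leq \tfrac{11}{10}n - x_0
\]
holds. Each of these conditions constrains $y$ to an interval of length at most $n/5 + 1$, so the set of $y$'s violating at least one of them, intersected with $[n]$, has size at most $2n/5 + 2$. For sufficiently large $n$ this is less than $n/2$, so the number of $y \in [n]$ with $(x_0,y) \in S$ is at least $n - (2n/5 + 2) \geq n/2$.

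The argument for columns is symmetric: fix $y_0 \in [n]$, and observe that the two bad conditions again cut out two intervals of $x$-values, each of length at most $n/5 + 1$, so the same counting bound applies. Since the defining inequalities for $S$ are symmetric in $x$ and $y$ (swapping them just exchanges the two intervals), no separate work is needed.

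There is no real obstacle here; the only thing to be careful about is the absorption of the additive $+1$ terms and of the endpoints of the intervals into the bound $n/2$, which relies on $n$ being sufficiently large, consistent with the blanket asymptotic convention set in the notation section. I would simply present the row computation explicitly and remark that the column case is identical by the symmetry $(x,y) \mapsto (y,x)$.
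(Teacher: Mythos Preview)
Your argument is correct. It is slightly different from the paper's proof: the paper uses the symmetries $(x,y)\mapsto(y,x)$ and $(r,c)\mapsto(n+1-r,c)$ to reduce to rows $r\le (n+1)/2$, and then does a case split on whether $r\le n/10$ or $n/10<r\le(n+1)/2$, in each case exhibiting explicit intervals of $c$'s lying in $S$ and checking that their total length exceeds $n/2$. You instead bound the complement directly: the bad $y$'s form the union of two intervals each of length at most $n/5+1$, giving at most $2n/5+2<n/2$ bad values. Your route is a bit cleaner since it avoids the case analysis entirely; the paper's route has the minor advantage of naming the good region explicitly, but that is not used later. Either way the content is the same elementary interval count.
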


\begin{proof}
	Since $S$ is invariant under reflection along the $y=x$ axis it suffices to prove the claim for rows. $S$ is also invariant under the reflection $(r,c) \mapsto (n+1-r,c)$, so it suffices to prove the claim for rows $r \leq (n+1)/2$.
	
	Let $r \in [n]$ be at most $(n+1)/2$. We will show that row $r$ contains at least $n/2$ positions in $S$. Indeed, if $1 \leq r \leq n/10$ then for every $2n/10 < c < 8n/10$, it holds that $(r,c) \in S$. Otherwise, $n/10 < r \leq (n+1)/2$. In this case every $c$ satisfying one of $n/10 + r < c < 9n/10 - r$, $c < r - n/10$, or $c > 11n/10 - r$ will also satisfy $(r,c) \in S$. In any case, there are more than $n/2$ elements $c \in [n]$ such that $(r,c) \in S$.
\end{proof}

\begin{clm}\label{clm:many balanced positions}
	For every $(r,c) \in [n]^2$ there are at least $n^2/5$ positions $(x,y) \in [n]^2$ such that $(r,y)$ and $(x,c)$ are balanced and $(r,c)$ and $(x,y)$ do not share a line.
\end{clm}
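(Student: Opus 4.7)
The plan is to build the desired set of pairs by independently choosing $y$ so that $(r,y) \in S$ and $x$ so that $(x,c) \in S$, and then subtract the small number of pairs that happen to share a line with $(r,c)$.

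First I would apply Claim \ref{clm:S intersection large} twice: to row $r$, to obtain a set $Y_r := \{y \in [n] : (r,y) \in S\}$ of size at least $n/2$, and to column $c$, to obtain a set $X_c := \{x \in [n] : (x,c) \in S\}$ of size at least $n/2$. By the preceding claim, every position in $S$ is balanced, so for any $(x,y) \in X_c \times Y_r$ both $(r,y)$ and $(x,c)$ are balanced. Taking the Cartesian product gives at least $n^2/4$ candidate pairs $(x,y)$.

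Next I would account for the pairs that must be discarded because $(x,y)$ shares a line with $(r,c)$. The four lines through $(r,c)$ are the row $\{x=r\}$, the column $\{y=c\}$, and the two diagonals $\{x+y = r+c\}$ and $\{x-y = r-c\}$. Each of these lines contains at most $n$ positions in $[n]^2$, and hence at most $n$ positions in $X_c \times Y_r$. Thus the number of discarded pairs is at most $4n$, leaving at least
\[
\frac{n^2}{4} - 4n \;\geq\; \frac{n^2}{5}
\]
valid pairs once $n$ is sufficiently large, which completes the proof.

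No real obstacle is expected here: the statement is essentially a counting exercise combining Claim \ref{clm:S intersection large} with the elementary fact that a line in $[n]^2$ has at most $n$ points. The only point requiring minor care is confirming that ``shares a line'' in this context refers to the four non-toroidal lines through $(r,c)$, so that the excluded set is bounded by $4n$ rather than something larger.
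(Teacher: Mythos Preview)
Your proposal is correct and follows essentially the same argument as the paper: define the candidate set as the product of the $S$-slice in row $r$ with the $S$-slice in column $c$, then subtract the at most $O(n)$ positions lying on the lines through $(r,c)$. The only cosmetic difference is that the paper removes the row and column constraints upfront (taking $y\neq c$, $x\neq r$, hence sizes $\ge n/2-1$) and then subtracts only the two diagonals, arriving at $(n/2-1)^2-2n\ge n^2/5$, whereas you subtract all four lines at once to get $n^2/4-4n\ge n^2/5$.
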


\begin{proof}
	Let $(r,c) \in [n]^2$. Let $S_r \coloneqq \{y : (r,y) \in S, y \neq c \}$ and $S_c \coloneqq \{ x : (x,c) \in S, x \neq r \}$. By Claim \ref{clm:S intersection large}, $|S_r|,|S_c| \geq n/2 - 1$. Now, let
	\[
	S_{(r,c)} \coloneqq \left(S_c \times S_r\right) \setminus \left( \ell_1(r,c) \cup \ell_2(r,c) \right).
	\]
	We note that $|S_{(r,c)}| \geq (n/2 - 1)^2 - 2n \geq n^2 / 5$. Also, by definition, if $(x,y) \in S_{(r,c)}$ then it does not share a line with $(r,c)$. Next, we argue that for every $(x,y) \in S_{(r,c)}$, it holds that $(r,y)$ and $(x,c)$ are both balanced. Indeed, if $(x,y) \in S_{(r,c)}$, then (again by definition) $x \in S_c$. Therefore $(x,c) \in S$ and so by Claim \ref{clm:S intersection large} is balanced. Similarly, $(r,y) \in S$ and hence is balanced as well.
\end{proof}

In order to prove Lemma \ref{lem:an abundance of absorbers} we will use the following bounded-differences inequality.

\begin{thm}[\cite{warnke2016method} Corollary 1.4]\label{thm:bounded differences}
	Let $X_1,\ldots,X_N$ be i.i.d.\ Bernoulli random variables with $\Prob[X_1 = 1] = p$. Assume that for $K>0$ the function $f:\{0,1\}^N \to \R$ satisfies the Lipschitz condition $\left| f(\omega) - f(\omega') \right| \leq K$ whenever $\omega,\omega' \in \{0,1\}^N$ differ by a single coordinate. Then, for all $t > 0$:
	\[
	\Prob \left[ \left|f(X_1,\ldots,X_N) - \E \left[ f(X_1,\ldots,X_N) \right]\right| \geq t \right] \leq 2 \exp \left( - \frac{t^2}{2N(1-p)pK^2 + 2Kt/3} \right).
	\]
\end{thm}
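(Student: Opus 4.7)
The plan is to derive this Bernstein-type deviation inequality from the standard Doob-martingale construction combined with Freedman's martingale tail bound. Let $\mF_i = \sigma(X_1,\ldots,X_i)$ and define the Doob martingale $Y_i = \E[f(X_1,\ldots,X_N) \mid \mF_i]$, with increments $D_i = Y_i - Y_{i-1}$. Then $Y_0 = \E[f(X_1,\ldots,X_N)]$ and $Y_N = f(X_1,\ldots,X_N)$, so it suffices to control the tail of $Y_N - Y_0$.

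First I would observe that, conditional on $\mF_{i-1}$, the increment $D_i$ is a zero-mean function of the single Bernoulli $X_i$ alone, taking values $d_0$ (if $X_i = 0$) and $d_1$ (if $X_i = 1$). By the Lipschitz hypothesis,
\[
|d_1 - d_0| = \bigl|\E[f(\ldots,1,\ldots) - f(\ldots,0,\ldots) \mid \mF_{i-1}]\bigr| \leq K,
\]
so $|D_i| \leq K$ almost surely. The crucial step, which turns a Hoeffding-type bound into a Bernstein-type bound, is the sharper conditional variance estimate: the two-point zero-mean distribution of $D_i$ has variance $p(1-p)(d_1 - d_0)^2$, so
\[
\E[D_i^2 \mid \mF_{i-1}] = p(1-p)(d_1 - d_0)^2 \leq p(1-p)K^2.
\]
Summing over $i$ yields the deterministic bound $V \coloneqq \sum_{i=1}^N \E[D_i^2 \mid \mF_{i-1}] \leq Np(1-p)K^2$ on the predictable quadratic variation.

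At this point, I would invoke Freedman's inequality: for any martingale whose increments are bounded by $K$ in absolute value and whose predictable quadratic variation is deterministically at most $V$,
\[
\Prob[|Y_N - Y_0| \geq t] \leq 2\exp\!\left(-\frac{t^2}{2V + 2Kt/3}\right).
\]
Substituting $V = Np(1-p)K^2$ produces exactly the stated bound. The main obstacle is securing the factor $p(1-p)$ in the variance estimate rather than a weaker $K^2$; this genuinely exploits the Bernoulli structure of the $X_i$ (for generic bounded variables one only recovers Azuma-Hoeffding). Once the conditional-variance bound is in hand, the rest is a textbook application of Freedman's inequality.
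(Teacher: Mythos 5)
Your proposal is correct. Note, however, that the paper does not prove this statement at all: it is imported verbatim as Corollary~1.4 of Warnke's paper on the method of (typical) bounded differences, so there is no in-paper argument to compare against. Your derivation is a clean and standard way to obtain exactly this Bernstein--McDiarmid form: the Doob martingale $Y_i=\E[f\mid \mF_i]$ has increments that, conditionally on $\mF_{i-1}$, are two-point zero-mean functions of $X_i$ alone, the Lipschitz hypothesis gives $|d_1-d_0|\le K$ (hence $|D_i|\le K$, indeed $\le\max(p,1-p)K$), and the exact two-point variance identity $\E[D_i^2\mid\mF_{i-1}]=p(1-p)(d_1-d_0)^2\le p(1-p)K^2$ is precisely what upgrades Azuma--Hoeffding to the stated bound; applying Freedman's inequality to both $\pm(Y_N-Y_0)$ with the deterministic bound $V\le Np(1-p)K^2$ on the predictable quadratic variation reproduces the constant $2\exp\bigl(-t^2/(2N(1-p)pK^2+2Kt/3)\bigr)$ exactly. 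This is essentially the same exponential-moment-with-variance-control machinery underlying Warnke's proof (his general theorem allows a ``typical'' Lipschitz condition, of which the present statement is the special case used here), so your argument is a legitimate self-contained substitute for the citation, modulo treating Freedman's inequality as known; the only cosmetic caveat is to state clearly that Freedman is applied in its two-sided form with a one-sided increment bound $D_i\le K$ (and $-D_i\le K$), which your $|D_i|\le K$ already supplies.
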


Theorem \ref{thm:bounded differences} applies to functions of Bernoulli random variables. In order to apply it to our situation, we introduce a coupling of $\{Q(t)\}_{t=0}^T$ to a binomial random subset of $[n]^2$. For every $x \in [n]^2$, let $x_\alpha \in [0,1]$ be i.i.d.\ uniformly random. Observe that for every $q \in [0,1]$, $R(q) \coloneqq \{ x \in [n]^2 : x_\alpha < q \}$ is a binomial random subset of $[n]^2$ with density $q$.

Now consider the random process $\tilde{Q}(0),\tilde{Q}(1),\ldots$, defined as follows: $\tQ(0) = \emptyset$. Suppose that for $t<n$ we have defined $\tQ(t)$. Let $x \in [n]^2$ be the position that minimizes $x_\alpha$ which is also available in $\tilde{Q}(t)$ (by ``available'' we mean not sharing a row, column, or toroidal diagonal with a previously chosen position). Set $\tilde{Q}(t+1) = \tilde{Q}(t) \cup \{x\}$. If there is no such position, set $\tQ(t+1)=\tQ(t)$. Observe that $\{ \tilde{Q}(t) \}_{t=0}^T$ and $\{Q(t)\}_{t=0}^T$ have the same distribution. Thus we may (and do) identify them. We remark, however, that the process $\tQ(t)$ is defined also for $T < t \leq n$. This will be useful in the proof of the next claim.

Set $p = 1/(4n)$ and let $R = R(p)$. Let $\tR \subseteq R$ be the set of positions $x \in R$ that are not threatened by any other element of $R$ (that is, they are the only positions in their row, column, and toroidal diagonals).

\begin{clm}\label{clm:tR contained in QT}
	With probability $1 - \exp \left( -\Omega(n) \right)$ it holds that $\tR \subseteq Q(T)$.
\end{clm}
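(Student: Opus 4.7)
The plan is to exploit the greedy nature of the coupling: the process $\tQ$ examines positions in increasing order of $x_\alpha$ and adds each one for which no previously added position shares a line. The first step is to show that every element $x \in \tR$ is automatically added by $\tQ$. Indeed, $x_\alpha < p$ by definition of $R$, so any position $y$ with $y_\alpha < x_\alpha$ also satisfies $y_\alpha < p$ and hence lies in $R$. By the isolation property defining $\tR$, no such $y$ shares a row, column, or toroidal diagonal with $x$. Hence, when $x$ first becomes the minimum-$x_\alpha$ available position, no element previously added to $\tQ$ can block it, and $x$ is added.

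Next I would control the index at which $x$ is added. Because the greedy process always selects the minimum-$x_\alpha$ available position and every element of $[n]^2 \setminus R$ has $x_\alpha \geq p$, no such element is chosen until $\tQ$ has fully processed $R$. Consequently every element of $\tR$ is added within the first $|R|$ steps of the process, so $\tR \subseteq \tQ(|R|)$.

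It then suffices to bound $|R|$. Since $|R|$ is binomial with parameters $n^2$ and $p = 1/(4n)$, its mean is $n/4$, and a standard Chernoff bound gives $\Prob[|R| > n/2] \leq \exp(-\Omega(n))$. For sufficiently large $n$ we have $n/2 < T = \lfloor (1 - n^{-\alpha}) n \rfloor$, so with probability $1 - \exp(-\Omega(n))$ we obtain $\tR \subseteq \tQ(|R|) \subseteq \tQ(T) = Q(T)$, as required.

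The only place that requires care is the first step, where one must verify that the $\tR$-isolation condition, combined with the observation that nothing processed before an $\tR$-element can lie outside $R$, really does prevent $x$ from being blocked throughout the process. The step-counting in the second paragraph and the Chernoff estimate in the third are routine.
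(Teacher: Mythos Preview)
Your proof is correct and follows essentially the same approach as the paper: both argue that any $x \in \tR$ is eventually added to $\tQ$ (since nothing with smaller $x_\alpha$ can threaten it) and then invoke a Chernoff bound on $|R|$ to ensure this happens by step $T$. Your write-up is more detailed---the paper simply asserts that $t_x \leq T$ ``clearly holds if $|R| \leq T$''---but the underlying idea is identical.
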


\begin{proof}
	We note that by Chernoff's inequality we have $\Prob \left[ |R| > T \right] = \exp \left( - \Omega(n) \right)$.
	
	Let $x \in \tR$. By definition of the process, it holds that $x_\alpha < y_\alpha$ for every $y$ that threatens $x$. Thus, there exists some $t$ (possibly greater than $T$) such that $x \in \tQ(t)$. Let $t_x$ be the smallest $t$ such that $x \in \tQ(t)$. It suffices to show that $t_x \leq T$. This clearly holds if $|R| \leq T$. The latter event holds with probability $1 - \exp \left( -\Omega(n) \right)$, as desired.
\end{proof}

Due to Observation \ref{obs:safe absorbers survive} and Claim \ref{clm:tR contained in QT}, with probability $1 - \exp \left( -\Omega(n) \right)$ we have $\mS_{\tR}(r,c) \subseteq \mS_{Q(T)}(r,c)$ for every $(r,c) \in [n]^2$. Therefore, Lemma \ref{lem:an abundance of absorbers} follows from the following claim.

\begin{clm}\label{clm:tR has lots of absorbers}
	There exists a constant $\beta > 0$ such that w.h.p.\ $\tR$ is $(\beta n)$-absorbing.
\end{clm}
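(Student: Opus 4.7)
Fix $(r,c) \in [n]^2$. The plan is to show $\E |\mS_R(r,c)| \geq \gamma n$ for some constant $\gamma > 0$, to establish $|\mS_R(r,c)| \geq \gamma n/2$ with probability $1 - n^{-\omega(1)}$ by a concentration argument, and to union bound over all $n^2$ choices of $(r,c)$. Because every element of $\mS_R(r,c)$ together with its four witnesses $a_1, a_2, b_1, b_2$ is isolated in $R$ and hence lies in $\tR$, and because isolation of the $a_i, b_i$ forces the four non-toroidal diagonals incident to $(r,y)$ and $(x,c)$ to be free of $R$, we have $\mS_R(r,c) \subseteq \mS_{\tR}(r,c) \subseteq \mB_{\tR}(r,c)$ deterministically, so the $n$-fold concentration suffices for the claim.

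Computing the first moment is straightforward. By Claim \ref{clm:many balanced positions} there are at least $n^2/5$ positions $(x,y)$ for which $(r,y)$ and $(x,c)$ are both balanced and $(x,y)$ does not share a line with $(r,c)$. Plugging $p = 1/(4n)$ into Claim \ref{clm:safe absorber probability} yields, for each such $(x,y)$,
\[
\Prob[(x,y) \in \mS_R(r,c)] \geq \left(\tfrac{n}{10}\right)^4 \left(\tfrac{1}{4n}\right)^5 \left(1-\tfrac{1}{4n}\right)^{20n} - O(n^{-2}) = \Omega(n^{-1}),
\]
using $(1-1/(4n))^{20n} \to e^{-5}$. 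Summing over the $\Omega(n^2)$ such balanced positions gives $\E |\mS_R(r,c)| \geq \gamma n$ for some absolute $\gamma > 0$.

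The concentration step is the principal obstacle. Applied naively, Theorem \ref{thm:bounded differences} is useless: a single coordinate flip $z$ can shift $|\mS_R(r,c)|$ by as much as $\Theta(n)$, for example when $z$ lands in the role of some $a_i$ on $\ellfar{1}(r,y)$, which determines $y$ while leaving $x$ free over all of $[n]$. The resulting Lipschitz constant $K = \Theta(n)$ gives a vacuous exponent of order $-1/n$ once $p = 1/(4n)$ is substituted. To tame this, I would introduce the typical event $\mathcal{E}$ that every row, column, and toroidal diagonal meets $R$ in at most $k \coloneqq \lceil 10 \log n / \log \log n \rceil$ positions. Since $|R \cap \ell| \sim \mathrm{Bin}(n, 1/(4n))$ has mean $1/4$, a Chernoff bound and a union bound over the $4n$ lines give $\Prob[\mathcal{E}^c] = n^{-\omega(1)}$. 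On $\mathcal{E}$, any single-coordinate flip shifts $|\mS_R(r,c)|$ by at most $O(k^2) = (\log n)^{2+o(1)}$: each of the $O(1)$ roles that $z$ can play (either as one of the five positions of a witness 5-tuple, or as a blocker on one of the at most $20$ lines through them) pins one coordinate of the affected $(x,y)$ to a line meeting $R$ in at most $k$ points, and then similarly restricts the other coordinate. Applying Theorem \ref{thm:bounded differences} to a function that agrees with $|\mS_R(r,c)|$ on $\mathcal{E}$ and vanishes off it --- or equivalently invoking the typical bounded differences variant of \cite{warnke2016method} --- yields an exponent of order $-n/\log^{O(1)} n$ and hence $\Prob[|\mS_R(r,c)| < \gamma n / 2] \leq n^{-\omega(1)}$. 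A union bound over $(r,c) \in [n]^2$ completes the proof with $\beta = \gamma / 2$.
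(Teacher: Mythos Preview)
Your argument is essentially correct, but it takes a detour that the paper avoids. The key divergence is in the choice of random variable to which the concentration inequality is applied. You work with $|\mS_R(r,c)|$, correctly observe that its naive Lipschitz constant is $\Theta(n)$, and then salvage the situation by conditioning on the typical event $\mathcal{E}$ that every line meets $R$ in $O(\log n/\log\log n)$ points, arriving at a polylogarithmic typical Lipschitz constant and invoking the typical-bounded-differences version of Warnke's inequality. This does work, though note that the ``vanishing off $\mathcal{E}$'' trick is not literally an application of Theorem~\ref{thm:bounded differences}: the truncated function jumps by $\Theta(n)$ at the boundary of $\mathcal{E}$, so one genuinely needs the more general statement from \cite{warnke2016method} rather than Corollary~1.4.

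The paper instead applies Theorem~\ref{thm:bounded differences} directly to $X_{(r,c)}=|\mS_{\tR}(r,c)|$. The point is that passing from $R$ to $\tR$ already performs the regularisation that your event $\mathcal{E}$ is meant to provide: flipping a single coordinate of $R$ changes $\tR$ in at most four positions (an added point can join $\tR$ only if its four lines were empty, and otherwise it can evict at most one $\tR$-point per line). Combined with Claim~\ref{clm:tR Lipschitz}, which bounds the effect on $|\mS_{\tR}(r,c)|$ of a single change to a partial toroidal configuration by $5$, this gives a global Lipschitz constant of $20$. The first-moment computation is identical to yours, and then the vanilla inequality with $N=n^2$, $p=1/(4n)$, $K=20$ yields $\exp(-\Omega(n))$ failure probability. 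So the paper trades your typical-bounded-differences machinery for the simpler structural observation that $\tR$, being a partial toroidal configuration by construction, varies in a controlled way with $R$; both routes reach the goal, but the paper's is shorter and yields a cleaner exponent.
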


We will apply Theorem \ref{thm:bounded differences} to prove Claim \ref{clm:tR has lots of absorbers}. We will use the following Lipschitz condition.

\begin{clm}\label{clm:tR Lipschitz}
	Let $Q_1,Q_2 \subseteq [n]^2$ be partial toroidal $n$-queens configurations such that $|Q_1 \Delta Q_2| = 1$. Then, for every $(r,c) \in [n]^2$, we have $||\mS_{Q_1}(r,c)| - |\mS_{Q_2}(r,c)|| \leq 5$.
\end{clm}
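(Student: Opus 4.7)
The plan is to reduce, by symmetry, to the case $Q_2 = Q_1 \cup \{q\}$ for some queen $q = (q_r, q_c) \notin Q_1$, and then appeal to Observation~\ref{obs:safe absorbers survive}, which yields $\mS_{Q_1}(r,c) \subseteq \mS_{Q_2}(r,c)$. Thus $||\mS_{Q_1}(r,c)| - |\mS_{Q_2}(r,c)|| = |\mS_{Q_2}(r,c) \setminus \mS_{Q_1}(r,c)|$, which I will bound by $5$. Monotonicity is the crucial enabling ingredient: without it one might fear that adding a single queen could destroy many absorbers at once, but Observation~\ref{obs:safe absorbers survive} rules this out entirely, leaving only the symmetric direction to control.

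Now fix $(x,y) \in \mS_{Q_2}(r,c) \setminus \mS_{Q_1}(r,c)$ and ask which safe-absorber condition is newly satisfied in $Q_2$. The constraint that $(x,y)$ not share a non-toroidal diagonal with $(r,c)$ is independent of the configuration, and the constraint that the non-toroidal diagonals through $(r,y)$ and $(x,c)$ be unoccupied is preserved under passage to subsets; both must therefore already hold in $Q_1$. The conditions that can genuinely change are the membership condition $(x,y) \in Q$ and the toroidal requirement that each of $\elltor{1}(r,y), \elltor{2}(r,y), \elltor{1}(x,c), \elltor{2}(x,c)$ contains a queen in its far part. Accordingly, either (A) $(x,y) = q$, contributing at most one element to the difference, or (B) $(x,y) \in Q_1$ and $q$ itself supplies the new far-part queen for one of these four toroidal diagonals.

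The main observation for case (B) is that specifying which of the four far-diagonals contains $q$ pins down one coordinate of $(x,y)$. For instance, $q \in \ellfar{1}(r,y)$ forces $q_r + q_c \equiv r + y \pmod{n}$, which fixes $y \in [n]$ uniquely. Since $Q_1$ is a partial toroidal $n$-queens configuration, any fixed column contains at most one queen of $Q_1$, so this sub-case contributes at most one $(x,y)$. The remaining three sub-cases are analogous: $q \in \ellfar{2}(r,y)$ again fixes $y$, while $q \in \ellfar{1}(x,c)$ or $q \in \ellfar{2}(x,c)$ fixes $x$, after which row-uniqueness of the partial configuration yields at most one candidate each. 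Summing the four contributions from (B) with the single contribution from (A) gives the claimed bound of $5$. I do not anticipate any serious obstacle beyond this case analysis, since Observation~\ref{obs:safe absorbers survive} has already done the harder half of the Lipschitz control.
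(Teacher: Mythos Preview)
Your proposal is correct and follows essentially the same route as the paper: reduce by symmetry to $Q_2 = Q_1 \cup \{q\}$, invoke Observation~\ref{obs:safe absorbers survive} for monotonicity, then bound $|\mS_{Q_2}(r,c)\setminus\mS_{Q_1}(r,c)|$ by splitting into the case $(x,y)=q$ and the case where $q$ lands on one of the four far toroidal diagonals through $(r,y)$ or $(x,c)$, each of which fixes a coordinate of $(x,y)$ and hence (via the partial-configuration property of $Q_1$) at most one queen. The paper's argument is identical in structure and detail.
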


\begin{proof}
	Let $(r,c) \in [n]^2$. It suffices to show that if $R_2$ is obtained from $R_1$ by adding a queen $(x,y)$, then $||\mS_{Q_1}(r,c)| - |\mS_{Q_2}(r,c)|| \leq 5$.
	
	Since $Q_1 \subseteq Q_2$, Observation \ref{obs:safe absorbers survive} implies that $\mS_{Q_1}(r,c) \subseteq \mS_{Q_2}(r,c)$. Hence, it is enough to show that
	\[
	| \mS_{Q_2}(r,c) \setminus \mS_{Q_1}(r,c) | \leq 5.
	\]
	
	We observe that overall, $R_2$ contains at most one more (not necessarily safe) absorber for $(r,c)$ than $R_1$. Indeed, the only candidate for a new absorber in $R_2$ is $(x,y)$ itself. In this way, $\mS_{R_2}(r,c)$ may contain at most one absorber that is not in $\mB_{R_1}(r,c)$. Any other elements of $\mS_{Q_2}(r,c) \setminus \mS_{Q_1}(r,c)$ must have the following form: There exists some $(x',y') \in \mB_{Q_1}(r,c) \setminus \mS_{Q_1}(r,c)$ and $(x',y') \in \mS_{Q_2}(r,c)$. This occurs only if $(x,y) \in \ellfar{i}(x',y')$ for some $i=1,2$ and some $(x',y') \in \mB_{Q_1}(r,c)$. In this case $(x,y)$ shares a toroidal diagonal with one of $(r,y')$ and $(x',c)$. Hence, one of the following hold modulo $n$:
	\[
	x+y = r+y', \quad x-y = r-y', \quad x+y = x'+c, \quad x-y = x'-c.
	\]
	For each $x'$ that solves one of these equations, there is at most one $y'$ such that $(x',y') \in Q_1$. Similarly, for each $y'$ solving an equation there is at most one $x'$ such that $(x',y') \in Q_1$. Therefore, in this way, adding $(x,y)$ to $Q_1$ may contribute at most $4$ absorbers to $\mS_{Q_2}(r,c)$. In summary:
	\[
	| \mS_{Q_2}(r,c) \setminus \mS_{Q_1}(r,c) | \leq 5,
	\]
	as desired.
\end{proof}

\begin{proof}[Proof of Claim \ref{clm:tR has lots of absorbers}]
	We will now show that for $\beta \coloneqq (40e)^{-5}$ with high probability $\tR$ contains at least $\beta n$ safe absorbers for every position.
	
	Let $(r,c) \in [n]^2$. We will show that
	\[
	\Prob \left[ |\mS_{\tR}(r,c)| < \beta n \right] \leq \exp \left( - \Omega (n) \right).
	\]
	A union bound over the $n^2$ positions will then imply that w.h.p.\ this holds simultaneously for all positions.
	
	Let $X_{(r,c)} \coloneqq |\mS_{\tR}(r,c)|$. We will apply Theorem \ref{thm:bounded differences} to $X_{(r,c)}$. For $(x,y) \in [n]^2$, let $Y_{(x,y)}$ indicate the event $(x,y) \in R$. Then the random variables $\{Y_{(x,y)}\}_{(x,y) \in [n]^2}$ are independent. Furthermore, each $Y_{(x,y)}$ has Bernoulli distribution with parameter $p$. Additionally, $R, \tR$, and hence $X_{(r,c)}$ are determined by these random variables. Therefore $X_{(r,c)}$ is a function of the i.i.d.\ Bernoulli random variables $\{Y_{(x,y)}\}_{(x,y) \in [n]^2}$.
	
	We note that adding a queen $(x,y)$ to $R$ changes $\tR$ by at most $4$. Indeed, if $(x,y)$ is the unique queen in the lines incident to it in $R$, then $|\tR|$ increases by one. On the other hand, $(x,y)$ occupies four lines. If any of these contains a queen from $\tR$, then that queen will be removed from $\tR$ once $(x,y)$ is added to $R$. Hence, $|\tR|$ may decrease by at most $4$. Hence, By Claim \ref{clm:tR Lipschitz}, changing the value of some $Y_{(x,y)}$ can change the value of $X_{(r,c)}$ by at most $20$.
	
	Next we bound $\E X_{(r,c)}$ from below, with the goal of showing that $\E X_{(r,c)} - \beta n = \Omega(n)$. By Claim \ref{clm:many balanced positions} there are at least $n^2/5$ positions $(x,y)$ such that $(r,c)$ and $(x,y)$ do not share a line and $(r,y),(x,c)$ are balanced. Let $P$ be the set of these positions. For each $(x,y) \in P$ let $Z_{(x,y)}$ indicate the event that $(x,y) \in \mS_{\tR}(r,c)$. Then
	\[
	X_{(r,c)} \geq \sum_{(x,y) \in P} Z_{(x,y)}.
	\]
	By Claim \ref{clm:safe absorber probability}:
	\[
	\E Z_{(x,y)} = \Prob [Z_{(x,y)} = 1] \geq |\ellfar{1}(r,y)| |\ellfar{2}(r,y)| |\ellfar{1}(x,c)| |\ellfar{2}(x,c)| p^5 (1-p)^{20n} - O \left( \frac{1}{n^2} \right).
	\]
	If $(x,y) \in P$ then $(r,y)$ and $(x,c)$ are balanced. Thus:
	\[
	|\ellfar{1}(r,y)| |\ellfar{2}(r,y)| |\ellfar{1}(x,c)| |\ellfar{2}(x,c)| \geq \left( \frac{n}{10} \right)^4.
	\]
	Hence
	\[
	\E Z_{(x,y)} \geq \left( \frac{n}{10} \right)^4 p^5 (1-p)^{20n} - O \left( \frac{1}{n^2} \right) \stackrel{p = 1/(4n)}{=} \frac{1}{(4e)^5 10^4n} - O \left( \frac{1}{n^2} \right).
	\]
	Therefore:
	\[
	\E X_{(r,c)} \geq \sum_{(x,y) \in P} \E Z_{(x,y)} \geq \frac{|P|}{(4e)^5 10^4n} - O \left( \frac{|P|}{n^2} \right) \geq \frac{n}{5 (4e)^5 10^4} - O(1) = 2\beta n - O(1).
	\]
	Let $\lambda \coloneqq \E X_{(r,c)} - \beta n$. Then $\lambda \geq \beta n - O(1) = \Omega (n)$. By Theorem \ref{thm:bounded differences}:
	\begin{align*}
	\Prob \left[ X_{(r,c)} < \beta n \right] = \Prob \left[ X_{(r,c)} < \E X_{(r,c)} - \lambda \right] & \leq 2 \exp \left( - \frac{\lambda^2}{2n^2(1-p)p 20^2 + 40 \lambda / 3} \right)\\
	& \leq \exp \left( - \Omega (n) \right).
	\end{align*}
	This completes the proof of Claim \ref{clm:tR has lots of absorbers}, and hence also of Lemma \ref{lem:an abundance of absorbers}.
\end{proof}

\section{Proof of Theorem \ref{thm:main}}

In this section we use the results of Sections \ref{sec:rga} and \ref{sec:absorbers} to prove the enumeration in Theorem \ref{thm:main}. The idea is to bound, from below, the number of successful outcomes of the random process. Then, we divide this by (an upper bound on) the number of ways in which each particular $n$-queens configuration can be constructed by the process. The result is a lower bound on $\mQ(n)$.

We begin by counting the number of outcomes in the random greedy phase (Algorithm \ref{alg:rga}). We have shown that w.h.p.\ this algorithm succeeds in constructing a partial $n$-queens configuration with $T = \lfloor (1-n^{-\alpha})n \rfloor$ queens. Furthermore, w.h.p.\ for every $0 \leq t < T$, the number of available positions at time $t$ satisfies $|\mA(t)| = \left( 1 \pm n^{-0.4} \right) n^2 (1-t/n)^4$. Thus, the number of outcomes of the random greedy phase is at least
\[
X \coloneqq \prod_{t=0}^{T-1} \left( 1 \pm n^{-0.4} \right) n^2 (1-t/n)^4 \geq \left( \left( 1 - n^{-\alpha} \right)n^2\right)^T \exp \left( 4 \sum_{t=0}^{T-1} \log \left( 1 - t/n \right) \right).
\]
Turning our attention to the sum, we have:
\[
\sum_{t=0}^{T-1} \log \left( 1 - t/n \right) = n \frac{1}{n} \sum_{t=0}^{T-1} \log \left( 1 - t/n \right)
\geq n \int_0^{T/n} \log(1-x)dx \geq - n \left( 1 + \tO(n^{-\alpha}) \right).
\]
Hence:
\[
X \geq \left( \left( 1 - \tO(n^{-\alpha}) \right) n^2\right)^T \exp \left( -4n - \tO(n^{1-\alpha}) \right) \geq \left( \left(1 - \tO(n^{-\alpha}) \right) \frac{n^2}{e^4}\right)^n.
\]
Furthermore, by Lemma \ref{lem:an abundance of absorbers} all but a vanishing fraction of these outcomes are $10 (n-T)$-absorbing, and therefore can be completed to a complete configuration by Algorithm \ref{alg:absorption}.

We now wish to bound, from above, the number of ways in which a specific $n$-queens configuration can be constructed by the algorithm. Given such a configuration, $2(n-T)$ of its queens were placed by the absorbing stage. Given $2(n-T)$ queens, there are $(2(n-T)-1)!! = (2(n-T))! / (2^{n-T}(n-T)!)$ ways to partition them into $n-T$ pairs. For each pair, there are (at most) $2$ ways it might have been obtained as an absorber. Finally, there are now $T!$ orders in which the remaining queens might have been placed on the board. Thus, the number of ways to obtain the configuration is at most
\begin{align*}
Y \coloneqq \binom{n}{2(n-T)} (2(n-T))!! \times 2^{n-T} \times T!
& \leq n^{2(n-T)} (2(n-T))^{2(n-T)} 2^{n-T} n!\\
& \leq \left(  1 + \tO(n^{-\alpha}) \right)^n n!.
\end{align*}
Applying Stirling's approximation:
\[
Y \leq \left(\left(1 + \tO(n^{-\alpha})\right) \frac{n}{e} \right)^n.
\]
Therefore, the number of $n$-queens configurations satisfies
\[
\mQ(n) \geq \frac{X}{Y} \geq \frac{\left( \left(1 - \tO(n^{-\alpha}) \right) \frac{n^2}{e^4}\right)^n}{\left(\left(1 + \tO(n^{-\alpha})\right) \frac{n}{e} \right)^n} \geq
\left( \left(1 - \tO(n^{-\alpha}) \right) \frac{n}{e^3}\right)^n,
\]
as desired.

\section{Concluding remarks}

\begin{itemize}
	\item We have established that $\liminf_{n\to\infty} \frac{\mQ(n)^{1/n}}{n} \geq e^{-3}$. In \cite{luria2017new} the first author proved that for $\alpha \approx 1.587$, $\limsup_{n\to\infty} \frac{\mQ(n)^{1/n}}{n} \leq e^{-\alpha}$. It would be interesting to close this gap. In particular, \cite{zhang2009counting} provides numerical evidence that for $\beta \approx 1.944$, $\lim_{n\to\infty} \frac{\mQ(n)^{1/n}}{n} = e^{-\beta}$.

	\item A curious feature of our algorithm is that although, ultimately, it constructs a \textit{non-toroidal} configuration, the random greedy algorithm actually constructs an approximate \textit{toroidal} configuration.
	
	As mentioned in the introduction the toroidal algorithm has a regularity which makes its analysis more amenable than its non-toroidal counterpart. Nevertheless, in computer simulations we have found the non-toroidal algorithm extremely successful at constructing approximate configurations. Furthermore, (for $n \geq 100$ and as large as $10^7$) we have never seen Algorithm \ref{alg:absorption} fail to obtain a complete configuration.
	
	Ours is not the first random greedy algorithm used to construct $n$-queens configurations: In \cite{minton1990solving} configurations were generated by proceeding through the board's rows from top to bottom, where in each row a queen was placed uniformly at random among the available positions. Finally, a min-conflicts algorithm was used to modify the board and find a complete solution. This algorithm was found to construct complete configurations with ease, but it was not analyzed formally. We have found this algorithm to possess a non-uniformity making its analysis challenging as well.

	Conversely, we wonder if the output of Algorithm \ref{alg:rga} can be completed to a toroidal configuration (assuming $n=1,5 \bmod 6$, which is a necessary condition). If so, this would give a lower bound of $((1-\oone)ne^{-3})^n$ on such configurations, matching an upper bound of the first author \cite[Theorem 1.2]{luria2017new}. The difficulty we have encountered is that in an approximate toroidal configuration nearly all toroidal diagonals are occupied. This makes absorbers like those in our proof difficult to find.
	
	\item The list of conjectures at the end of \cite{bell2009survey} contains several generalizations of the $n$-queens problem, for example to boards with different topologies and higher dimensions. It would be interesting if these problems could be attacked with techniques similar to the one in this paper.
\end{itemize}

\bibliography{torus}
\bibliographystyle{amsplain}

\end{document}